\date{\today}
\DeclareMathOperator{\id}{id}
\DeclareMathOperator{\Stab}{Stab}
\DeclareMathOperator{\Aut}{Aut}
\DeclareMathOperator{\Sym}{Sym}
\DeclareMathOperator{\Homeo}{Homeo}
\DeclareMathOperator{\Fix}{Fix}
\newcommand{\Full}{\mathbf{F}}
\newcommand{\defbold}{\textbf}
\newcommand{\grp}[1]{\langle #1 \rangle}
\newcommand{\Z}{\mathbb{Z}}
\newcommand{\G}{\mathcal{G}}
\newcommand{\N}{\mathbb{N}}
\newcommand{\B}{\mathcal{B}}
\newcommand{\atom}{\mathrm{at}}
\newcommand{\mc}[1]{\mathcal{#1}}
\newcommand{\rest}{\upharpoonright}
\newtheorem{theorem}{Theorem}[section]
\newtheorem{proposition}[theorem]{Proposition}
\newtheorem{corollary}[theorem]{Corollary}
\newtheorem{lemma}[theorem]{Lemma}
\theoremstyle{definition}
\newtheorem{qu}[theorem]{Question}
\newtheorem{definition}[theorem]{Definition}
\newtheorem{example}[theorem]{Example}
\newtheorem*{qu_intro}{Question}
\title{Discrete locally finite full groups of Cantor set homeomorphisms}
\author[A.\ Garrido]{Alejandra Garrido}
\address{Departamento de Matem\'aticas\\
Universidad Aut\'onoma de Madrid\\
 and ICMAT\\
 Madrid\\ 
 Spain}
\email{alejandra.garrido@uam.es\\ alejandra.garrido@icmat.es}
\author[C.\ D.\ Reid]{Colin D.\ Reid}
\address{School of Mathematical and Physical Sciences\\
	University of Newcastle\\
	University Drive\\
	2308\\
	Callaghan, Australia}
\email{colin@reidit.net}
\date{\today}
\thanks{This research was carried out while both authors were based at the University of Newcastle, Australia. The first-named author was, and the second-named author remains, a Postdoctoral Research Associate funded through ARC project \emph{Zero-dimensional symmetry and its ramifications} (FL170100032). The authors are grateful for the excellent working atmosphere and research group made possible by this funding.}
\keywords{Full groups, Cantor dynamics, locally compact groups, locally finite groups, Bratteli diagrams, dimension group, dimension range}
\begin{document}

\begin{abstract}
This work is motivated by the problem of finding locally compact group topologies for piecewise full groups (a.k.a.~ topological full groups).
We determine that any piecewise full group that is locally compact in the compact-open topology on the group of self-homeomorphisms of the Cantor set must be uniformly discrete, in a precise sense that we introduce here.
Uniformly discrete groups of self-homeomorphisms of the Cantor set are in particular countable, locally finite, residually finite and discrete in the compact-open topology.
The resulting piecewise full groups form a subclass of the ample groups introduced by Krieger.
We determine the structure of these groups by means of their Bratteli diagrams and associated dimension ranges ($K_0$ groups).
We show through an example that not all uniformly discrete piecewise full groups are subgroups of the ``obvious'' ones, namely, piecewise full groups of finite groups.
\end{abstract}

\maketitle

\section{Introduction}
During the last few years, group theorists have become increasingly interested in groups of homeomorphisms of the Cantor set, as sources of new finitely generated infinite simple groups (e.g., \cite{juschenkomonod,matui,nek_torsion_simple}). 
The groups in question consist of homeomorphisms of the Cantor set that can be pieced together from finitely many partial homeomorphisms between clopen subsets. 
For this reason, we call them \emph{piecewise full groups}.  We avoid the more common term ``topological full group'' as it could be confusing in the context of discussing topological groups, where the word ``topological'' plays a very different role.

In the developing theory of totally disconnected locally compact groups there is also reason to search for and study simple groups (\cite{capracemonod,CRW2}), especially those that are compactly generated and non-discrete, and so piecewise full groups are a natural place to search. 
Indeed, some of the well-known examples of this kind, Neretin's groups of almost automorphisms of locally finite trees,  can be naturally expressed as piecewise full groups (\cite{Kapoudjian, neretin}).

In searching for totally disconnected locally compact groups among piecewise full groups the first question that one faces is, which topology should the group be given? 
Since the group $\Homeo(X)$ of all self-homeomorphisms of the Cantor set $X$ is a well-known topological group (the typical topology is the compact-open topology, see Section \ref{sec:prelims}), one's first thought might be to study piecewise full groups with the induced topology from $\Homeo(X)$. 
Those familiar with Neretin's group know (\cite{newdirs_neretin}) that this group is not locally compact for the compact-open topology in $\Homeo(X)$. 
Indeed, we show here (Corollary~\ref{cor:full_lc_implies_ample}) that any piecewise full group that is locally compact for the compact-open topology of $\Homeo(X)$ must be \emph{uniformly discrete} -- there is a clopen partition of $X$ each of whose parts is visited at most once by each group orbit.
 These groups are a special type of what Krieger called an \emph{ample group} in \cite{krieger}: that is, they are countable piecewise full subgroups of $\Homeo(X)$, such that the group is locally finite (that is, every finite subset is contained in a finite subgroup) and such that the fixed-point set of every element is clopen.
We give in Proposition \ref{prop:udfull_direct_union_symmetrics} a description of these groups as direct limits of finite direct sums of symmetric groups.
Such groups have been studied before, notably in \cite{DudkoMedynets}, \cite{HartleyZalesskii} and \cite{lavnek} (in the latter two the summands are alternating groups).

There are examples showing that not all ample groups are uniformly discrete.
Indeed, ample groups have been previously studied as sources of simple groups, whereas (see Proposition \ref{prop:finite_orbit_implies_resinf}) all uniformly discrete ample groups are residually finite (the intersection of all finite-index subgroups is trivial).
This shows how far from our initial objective the compact-open topology brings us. 
Our second main result, Theorem \ref{thm:bratteli_condition},  is a characterisation of the uniformly discrete groups among these ample groups. 
This is done via Bratteli diagrams, which is a well-known and clear  way to encode direct limits of structures that decompose as direct sums. 
Since Bratteli diagrams provide a convenient dictionary between ample groups and their associated dimension ranges (or $K_0$ groups with unit), we also translate in Corollary \ref{cor:dimension_range} the Bratteli diagram condition to one on the dimension range. 
The fact that uniformly discrete ample groups are residually finite can also be gleaned from the structure of their Bratteli diagrams (Proposition ~\ref{prop:ud_ample_resfin_diagramproof}).

%Having examined the structure of uniformly discrete ample groups from these viewpoints, we can note that these groups can never be simple, unless they are finite.
%In fact, the condition we give in Theorem  \ref{thm:bratteli_condition} is a ``strong opposite'' of simplicity of Bratteli diagrams, which is also closely related to (almost) simplicity of the corresponding ample group. 
%All this is to say that uniformly discrete ample groups are at the opposite end of the spectrum of locally finite groups that seem to be of interest in the literature, the simple ones.

Finally, we consider how uniformly discrete ample groups arise.
Obvious examples are piecewise full groups of finite groups, and we are able  in Propositions \ref{prop:F_is_ud} and \ref{prop:full_finite_iff_top_bratteli} to distinguish these  among all uniformly discrete ample groups both from the topological dynamics on the Cantor set and from their associated Bratteli diagrams. 
Less obvious examples of uniformly discrete ample groups are afforded by stabilisers of piecewise full groups of finite groups. 
Example \ref{eg:not_stab_finite_ud} is a uniformly discrete ample group that cannot arise in this way. 
We leave open the following question:

\begin{qu_intro}
	Which uniformly discrete ample groups arise as subgroups of piecewise full groups of finite groups? 
	Can they be distinguished by their Bratteli diagrams?
\end{qu_intro}

As noted above, uniformly discrete ample groups are residually finite, which is a group-theoretic property. 
However, uniform discreteness is not preserved by group isomorphism. 
Section \ref{sec:example} is devoted to a straightforward example of a group with two different actions on the Cantor set, one uniformly discrete, the other not.

The original question of which topology to put on piecewise full groups to make them locally compact and totally disconnected is addressed in the forthcoming paper \cite{pseudogroups}. 
Showing the necessity of the approach taken there was in fact the motivation for the present paper.

\section{First properties of uniformly discrete groups}\label{sec:prelims}

\subsection{Notation}

Throughout, $X$ denotes the Cantor set. 
The group $\Homeo(X)$ of self-homeomorphisms of $X$ is a Polish (separable and completely metrisable) group when endowed with the \defbold{compact-open topology}, whose sub-basic open sets have the form
$$\{f\in\Homeo(X)\mid f(K)\subseteq O\}$$
where $K\subseteq X$ is compact and $O\subseteq X$ is open. 
This topology is the coarsest one on $\Homeo(X)$ that makes the action of $\Homeo(X)$ on $X$ jointly continuous; that is, such that the map $\Homeo(X)\times X \rightarrow X, (h, x)\mapsto h(x)$ is continuous for the product topology on $\Homeo(X)\times X$ (see \cite[p.224]{Kelley_topbook}).
Because $X$ is compact and metrisable, the compact-open topology is equivalent to the also commonly-used \defbold{topology of uniform convergence}, whose basic open sets have the form
$$\mc{N}(f, \epsilon)=\{g\in\Homeo(X)\mid \sup_{x\in X}d(f(x),g(x))<\epsilon \}  $$
where $f\in\Homeo(X)$, $\epsilon>0$ and $d$ is any (fixed) compatible  metric on $X$ (see  \cite[p. 230]{Kelley_topbook} or \cite[\S 46]{munkres_topbook}). 

The set of clopen subsets of $X$ ordered by inclusion forms a Boolean algebra, which is in fact  the unique (up to isomorphism) countably infinite, atomless Boolean algebra, denoted $\B$ here. 
The set of ultrafilters of $\B$ (equivalently, homomorphisms to the two-element Boolean algebra) is a compact, perfect and totally disconnected space (and so homeomorphic to $X$) when topologised so that the set of ultrafilters containing $b\in\B$ is a basic open, for each $b\in\B$. 
This is simply Stone's representation theorem, which says that, starting from $X$ and performing these two operations yields a naturally homeomorphic copy of $X$ (see \cite[IV.4]{univ_alg_book}). 
In particular, $\Homeo(X)$ is isomorphic to $\Aut(\B)$. 
It is not too hard to show (see, e.g., \cite[Theorem 2.3(d)]{BDK_cantortops}) that the compact-open topology on $\Homeo(X)$ 
 (equivalently, the topology of uniform convergence) then corresponds to the \defbold{permutation topology} or \defbold{topology of pointwise convergence} on $\Aut(\B)$, whose basic identity neighbourhoods are subgroups of the form
$$\Stab(\mc{F})=\{g\in\Aut(\B)\mid g(b)=b \text{ for all } b\in \mc{F}\} \text{ where } \mc{F}\subset\B \text{ is finite}.$$

In what follows we will identify $\Homeo(X)$ and $\Aut(\B)$ as groups, and use whichever of these two equivalent points of view seems most convenient.  Given a finite subalgebra $\mathcal{A}$ of $\mathcal{B}$, we write $\atom(\mathcal{A})$ for the set of \defbold{atoms} of $\mathcal{A}$, that is, the minimal nonzero elements.  Note that every finite Boolean algebra is generated by its atoms.

\begin{definition}\label{def:pwfull}
Given a group $G \leq \Homeo(X)$ and a $G$-invariant subalgebra $\mc{A}$ of $\mc{B}$, the \defbold{piecewise full group $\Full_{\mc{A}}(G)$ of $G$ with respect to $\mc{A}$} consists of all homeomorphisms $g\in\Homeo(X)$ for which there is a finite clopen partition $X=U_1\sqcup  \dots \sqcup U_n$ of $X$ with $U_1,\dots, U_n \in \mc{A}$ and $g_1,\ldots,g_n\in G$ such that $g\rest_{U_i}=g_i\rest_{U_i}$ for $i=1,\ldots,n$.  If $\mc{A} = \mc{B}$ we simply write $\Full(G) := \Full_{\mc{A}}(G)$ and call $\Full(G)$ the \defbold{piecewise full group} of $G$ (as a subgroup of $\Homeo(X)$).

A group $G\leq\Homeo(X)$ is called \defbold{piecewise full} if $\Full(G)=G$.
Note that for any subgroup $G$ of $\Homeo(X)$, we have $\Full(\Full(G)) = \Full(G)$, so $\Full(G)$ is piecewise full.

\end{definition}

The piecewise full group is known elsewhere in the literature as the \defbold{topological full group}.  We avoid this name as we will discuss topological groups, and the two different uses of ``topological'' here could be confusing.

\subsection{Uniformly discrete groups}

Our motivating problem is finding appropriate topologies to impose on a piecewise full subgroup of $\Homeo(X)$ that make it locally compact and non-discrete.  
The most obvious choice is the subspace topology in $\Homeo(X)$, and we have already seen that the compact-open topology on the latter is the coarsest one that makes the action on $X$ continuous. 
We shall presently see (Corollary \ref{cor:full_lc_implies_ample}) that this choice of topology forces piecewise full groups to be discrete in a strong sense, which we call uniformly discrete.
\begin{definition}\label{def:ud}
	A group $G\leq\Homeo(X)$ is \defbold{uniformly discrete} if there exists a clopen partition $\mc{P}$ of $X$ such that $|Gx\cap U|\leq 1$ for every $x\in X$ and $U\in \mc{P}$. 
\end{definition}
 
Suppose $G$ is a subgroup of $\Homeo(X)$  equipped with the compact-open topology.
Given a finite set $\mc{F}$ of clopen subsets of $X$, write
$$G_{(\mc{F})}:=G\cap \Stab(\mc{F})=\{g\in G \mid g(C)=C \text{ for all } C\in \mc{F}\}.$$
Note that as $\mc{F}$ ranges over the clopen partitions of $X$, the subgroups $G_{(\mc{F})}$ form a base of clopen neighbourhoods of the identity in $G$. 

%Here are some basic observations about uniformly discrete groups.

\begin{lemma}\label{lem:basic_observations}
Let $G \le \Homeo(X)$ be equipped with the compact-open topology and let $\mc{P}$ be a clopen partition of $X$.
\begin{enumerate}[(i)]
\item $G$ is uniformly discrete with respect to $\mc{P}$ if and only if $\Full(G)$ is.
\item If $G$ is uniformly discrete with respect to $\mc{P}$ then $G$ is discrete: indeed, $G_{(\mc{P})}$ is trivial.
\end{enumerate}
\end{lemma}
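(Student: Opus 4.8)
The plan is to route both parts through one elementary fact: every element of $\Full(G)$ agrees pointwise with some element of $G$, so $G$ and $\Full(G)$ have exactly the same point-orbits. Since uniform discreteness with respect to $\mc{P}$ is a statement purely about the intersections $Gx \cap U$, this makes part (i) almost immediate once the orbit identity is in hand.

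For (i), I would first note that the inclusion $Gx \subseteq \Full(G)x$ is free from $G \le \Full(G)$, and it already yields the implication ``$\Full(G)$ uniformly discrete with respect to $\mc{P}$ $\Rightarrow$ $G$ uniformly discrete with respect to $\mc{P}$'', since $|Gx \cap U| \le |\Full(G)x \cap U|$. For the reverse inclusion I would take $g \in \Full(G)$ and a point $x$, invoke the defining clopen partition $X = U_1 \sqcup \cdots \sqcup U_n$ with $g\rest_{U_i} = g_i\rest_{U_i}$ for suitable $g_i \in G$, and observe that if $x \in U_i$ then $g(x) = g_i(x) \in Gx$. Hence $\Full(G)x \subseteq Gx$, so $\Full(G)x = Gx$ for every $x$, and uniform discreteness of one group with respect to $\mc{P}$ is then literally the same condition as that of the other.

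For (ii), I would fix $g \in G_{(\mc{P})}$, that is, $g$ stabilising each part of $\mc{P}$ setwise, and show $g = \id$. Given $x$ lying in the part $U \in \mc{P}$, the point $g(x)$ also lies in $U$, so $x$ and $g(x)$ both belong to $Gx \cap U$, which has at most one element by uniform discreteness; therefore $g(x) = x$. As $x$ is arbitrary, $g = \id$, so $G_{(\mc{P})}$ is trivial. Discreteness follows because $G_{(\mc{P})}$ is, by the remark immediately preceding the lemma, an open identity neighbourhood of $G$ in the compact-open topology, so its triviality means $\{\id\}$ is open.

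I do not anticipate a genuine obstacle; the only point needing slight care is that the witnessing partition and the elements $g_i$ depend on the chosen $g$, but since every step only evaluates at a single point this dependence is harmless.
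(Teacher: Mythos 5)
Your proposal is correct and takes essentially the same approach as the paper: part (i) rests on the observation that $G$ and $\Full(G)$ have the same orbits on $X$ (which you verify in detail by evaluating a piecewise element at a single point), and part (ii) shows every orbit of $G_{(\mc{P})}$ is a singleton, forcing $G_{(\mc{P})} = \{1\}$. The paper's proof states these same two facts more tersely, and your extra care about the $g$-dependence of the witnessing partition is indeed harmless for exactly the reason you give.
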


\begin{proof}
Part (i) follows immediately from the observation that $G$ and $\Full(G)$ have the same orbits on $X$.
For part (ii), we see that if $G$ is uniformly discrete with respect to $\mc{P}$, then all orbits of $G_{(\mc{P})}$ must be singletons, ensuring $G_{(\mc{P})} = \{1\}$.
\end{proof}

In general, discrete subgroups of $\Homeo(X)$, even finite subgroups, need not be uniformly discrete (see Example~\ref{ex:c2_not_ud}).  However, the properties are equivalent in the context of piecewise full groups, and indeed they are the only way $G_{(\mc{P})}$ can be compact.

\begin{theorem}\label{thm:full_lc_implies_ample}
Let $G\leq \Homeo(X)$ be a piecewise full group equipped with the compact-open topology and let $\mc{P}$ be a clopen partition of $X$.  Then the following are equivalent:
\begin{enumerate}[(i)]
\item $G_{(\mc{P})}$ is compact;
\item $G$ is uniformly discrete with respect to $\mc{P}$;
\item $G_{(\mc{P})}$ is trivial.
\end{enumerate}
\end{theorem}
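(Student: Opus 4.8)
The plan is to prove the three conditions equivalent via the cycle (ii)$\Rightarrow$(iii)$\Rightarrow$(i)$\Rightarrow$(ii). The implication (ii)$\Rightarrow$(iii) is exactly Lemma~\ref{lem:basic_observations}(ii), and (iii)$\Rightarrow$(i) is immediate since the trivial group is compact. So all the content lies in (i)$\Rightarrow$(ii), which I would establish in contrapositive form: assuming $G$ is \emph{not} uniformly discrete with respect to $\mc{P}$, I would show that $G_{(\mc{P})}$ fails to be compact.

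First I would unwind the failure of uniform discreteness and use it, together with piecewise fullness, to manufacture many elements of $G_{(\mc{P})}$. Failure of uniform discreteness gives a part $U\in\mc{P}$ containing two distinct points $y\neq z$ of a common $G$-orbit, say $z=g(y)$ with $g\in G$. Since $y\neq g(y)$ and $X$ is compact, Hausdorff and totally disconnected, I can shrink to a nonempty clopen $V$ with $y\in V\subseteq U$, $g(V)\subseteq U$ and $V\cap g(V)=\emptyset$; write $W:=g(V)$, so that $V$ and $W$ are disjoint clopen subsets of the \emph{single} part $U$. Now for each clopen $A\subseteq V$ let $t_A$ be the homeomorphism equal to $g$ on $A$, to $g^{-1}$ on $g(A)$, and to the identity elsewhere. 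As $A$, $g(A)$ and $X\setminus(A\cup g(A))$ form a finite clopen partition and $g,g^{-1},\id\in G$, piecewise fullness gives $t_A\in\Full(G)=G$; since $t_A$ is supported inside $U$ and interchanges the subsets $A,g(A)\subseteq U$, it fixes every part of $\mc{P}$ setwise, so $t_A\in G_{(\mc{P})}$.

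I would then read off non-compactness from an infinite orbit on the Boolean algebra $\B$. A direct computation gives $t_A(V)=g(A)\cup(V\setminus A)$, and intersecting this image with $W$ returns $g(A)$, from which $A$ is recovered as $g^{-1}(g(A))$; hence distinct clopen $A\subseteq V$ produce distinct images of $V$. Since $V$ is itself a Cantor set it has infinitely many clopen subsets, so the $G_{(\mc{P})}$-orbit of the single clopen set $V$ is infinite. Under the identification of the compact-open topology with the topology of pointwise convergence on $\Aut(\B)$, evaluation at $V$ is a continuous map $G_{(\mc{P})}\to\B$ into the countable discrete set $\B$; were $G_{(\mc{P})}$ compact, its orbit on $V$ would be finite. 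This contradiction shows $G_{(\mc{P})}$ is not compact, completing (i)$\Rightarrow$(ii) and hence the cycle.

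The main obstacle to anticipate is that the infinite orbit cannot be produced by a single group element. Members of a piecewise full group are finitary, assembled from finitely many restrictions of elements of $G$, so there is no ``odometer-type'' element of infinite order with an infinite trajectory available inside $G_{(\mc{P})}$. The resolution is to let the infinitude come instead from the infinite \emph{family} of finitary sub-transpositions $\{t_A\}_{A\subseteq V}$, and to detect non-compactness through finiteness of orbits on $\B$ rather than through any one element's dynamics.
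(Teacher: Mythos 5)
Your proof is correct and takes essentially the same approach as the paper's: the whole content is placed in the contrapositive of (i)$\Rightarrow$(ii), piecewise fullness is used to manufacture ``transposition'' elements of $G_{(\mc{P})}$ swapping a small clopen set $A\subseteq V$ with $g(A)$ inside a single part $U$, and compactness is excluded via the observation that a compact subgroup of $\Homeo(X)$ has finite orbits on clopen sets (the stabiliser of a clopen set is open, so has finitely many cosets). The only difference is cosmetic: where the paper builds a nested decreasing sequence of clopen supports inductively, you parametrise by all clopen $A\subseteq V$ at once and verify injectivity of $A\mapsto t_A(V)$ explicitly by intersecting with $W=g(V)$ --- a slightly tidier bookkeeping of the same idea.
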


\begin{proof}
First observe that a compact subgroup of $\Homeo(X)$ must have finite orbits on clopen subsets of $X$. 
To wit,  the stabiliser of a clopen subset is open and, since it and its cosets form an open cover of the group, there can only be finitely many cosets; the orbit-stabiliser theorem then yields the claim. 

	We prove (i) $\Rightarrow$ (ii) via the contrapositive.  Suppose that $G$ is not uniformly discrete with respect to $\mc{P}$; that is, there is $x \in U \in \mc{P}$ and $g \in G$ such that $g(x) \in U \setminus \{x\}$.
	Since $g$ is a homeomorphism and $X$ is totally disconnected, there is a neighbourhood $U_x$ of $x$ such that $U_x\cap g(U_x)=\emptyset$ and $U_x, g(U_x)\subseteq U$. 
	Pick $x_1\in U_x\setminus\{x\}$ and a clopen neighbourhood $U_{x_1}\ni x_1$ such that $x \not\in U_{x_1}$. 
	Since $G$ is piecewise full it contains an element $g_1$ such that 
	\[
	g_1=\begin{cases} g &\text { on } U_{x_1}\\
	g^{-1} & \text{ on } gU_{x_1}\\
	\id &\text{ elsewhere }
	\end{cases}.
	\]
	Note that $g_1(U_{x})=g(U_{x_1})\sqcup (U_x \setminus U_{x_1})$ and $g_1\in G_{(\mc{P})}$. 
	Repeat the argument inductively to find a sequence $(g_n)_n$ of elements of $G_{(\mc{P})}$ supported on a decreasing sequence $(U_{x_n})_n$ of clopen subsets of $U_x$ and such that $g_n(U_x)=g(U_{x_n})\sqcup (U_x\setminus U_{x_n})$. 
	This produces infinitely many clopen subsets of $X$ in the $G_{\mc{F}}$-orbit of $U_x$, meaning that $G_{(\mc{P})}$ cannot be compact.
	
%	Thus, if $G_{(\mc{P})}$ is compact, then $G$ is uniformly discrete with respect to $(\mc{P})$, that is, (i) implies (ii).  
	
	Lemma~\ref{lem:basic_observations}(ii) already shows that (ii) implies (iii), and clearly (iii) implies (i).
%	 Thus (i)--(iii) are equivalent.
\end{proof}

Since the subgroups $G_{(\mc{P})}$ as $\mc{P}$ ranges over the clopen partitions of $X$ form a base of neighbourhoods of the identity in $G$, we have the following corollary.

\begin{corollary}\label{cor:full_lc_implies_ample}
Let $G\leq \Homeo(X)$ be a piecewise full group equipped with the compact-open topology.  Then the following are equivalent:
\begin{enumerate}[(i)]
\item $G$ is locally compact;
\item $G$ is uniformly discrete;
\item $G$ is discrete.
\end{enumerate}
\end{corollary}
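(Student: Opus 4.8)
The plan is to derive the corollary from Theorem~\ref{thm:full_lc_implies_ample} together with the observation recorded just before it: as $\mc{P}$ ranges over the clopen partitions of $X$, the subgroups $G_{(\mc{P})}$ form a base of clopen neighbourhoods of the identity in $G$. Two of the implications are immediate and I would dispose of them first. A discrete group is locally compact, since then $\{1\}$ is an open compact neighbourhood of the identity; this gives (iii) $\Rightarrow$ (i). And Lemma~\ref{lem:basic_observations}(ii) already records that uniform discreteness forces $G$ to be discrete (indeed $G_{(\mc{P})}$ trivial), giving (ii) $\Rightarrow$ (iii).

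The one substantive step is (i) $\Rightarrow$ (ii). Assuming $G$ is locally compact, I would pick a compact neighbourhood $K$ of the identity. Its interior contains some basic identity neighbourhood, so there is a clopen partition $\mc{P}$ with $G_{(\mc{P})} \subseteq K$. Because each $G_{(\mc{P})}$ is clopen in $G$, it is in particular closed, and a closed subset of the compact set $K$ is compact; hence $G_{(\mc{P})}$ is compact. Applying the implication (i) $\Rightarrow$ (ii) of Theorem~\ref{thm:full_lc_implies_ample} to this partition then shows that $G$ is uniformly discrete with respect to $\mc{P}$, and therefore uniformly discrete.

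Chaining these yields the cycle (i) $\Rightarrow$ (ii) $\Rightarrow$ (iii) $\Rightarrow$ (i) and hence the equivalence. I do not expect a genuine obstacle here: all the dynamical content has already been extracted in Theorem~\ref{thm:full_lc_implies_ample}, and the corollary merely globalises it by quantifying over the neighbourhood base. The only point requiring care is to use that the $G_{(\mc{P})}$ are not merely a neighbourhood base but a base of \emph{clopen} subgroups, so that containment of one of them inside a compact neighbourhood upgrades, via closedness, to compactness of $G_{(\mc{P})}$ itself; this is exactly the hypothesis that Theorem~\ref{thm:full_lc_implies_ample} needs.
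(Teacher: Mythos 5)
Your proposal is correct and takes essentially the same route as the paper: the paper's one-line proof observes that each condition of the corollary is equivalent to the same-numbered condition of Theorem~\ref{thm:full_lc_implies_ample} holding for some clopen partition, and your cycle (i) $\Rightarrow$ (ii) $\Rightarrow$ (iii) $\Rightarrow$ (i) merely spells this out. In particular your key step --- a compact identity neighbourhood contains some clopen, hence closed, hence compact subgroup $G_{(\mc{P})}$, to which Theorem~\ref{thm:full_lc_implies_ample} applies --- is exactly the implicit content of the paper's argument for condition (i).
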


\begin{proof}
Each of the three conditions (i)--(iii) is equivalent to the statement that there exists a clopen partition $\mc{P}$, such that the same-numbered condition of Theorem~\ref{thm:full_lc_implies_ample} holds.  The equivalence of (i)--(iii) is then clear.
\end{proof}

Uniformly discrete groups are very far from being simple. 
In fact, they are residually finite, because they act on $X$ with finite orbits. 
\begin{lemma}\label{prop:finite_orbit_implies_resinf}	
Let $Y$ be a set and let $G\leq \Sym(Y)$. 
If all orbits of $G$ on $Y$ are finite, then $G$ is residually finite.
\end{lemma}
\begin{proof}
For each $y\in Y$, the pointwise fixator $\Fix(Gy)$ of the $G$-orbit $Gy$ of $y$ is a normal subgroup of finite index in $G$. 
Given any non-trivial $g\in G$, there is some $y\in Y$ such that $gy\neq y$, so $g\notin \Fix(Gy)$. 
\end{proof}

We shall presently see that being uniformly discrete imposes some algebraic conditions on a group. 
However, uniform discreteness is a \emph{dynamical condition}: it passes to subgroups but is not preserved by group isomorphisms. 
An example showing this is given in Section \ref{sec:example}, where the same group is shown to have two faithful actions on the Cantor set, one uniformly discrete, the other not. 
Indeed, the second action is obtained as a quotient of the first one, showing just how delicate the dynamical condition can be.

A further property of uniformly discrete piecewise full groups is that they are ample%
\footnote{Presumably Krieger chose the word ``ample" because they are particular cases of the topological analogue of ``full groups" introduced by Dye in the context of ergodic theory. } %
 in the sense  introduced by Krieger \cite{krieger}.  We paraphrase the definition.

\begin{definition}\label{def:ample}
	 A subgroup $G\leq\Homeo(X)$ is \defbold{ample} if it is piecewise full, locally finite, countable, and for every $g\in G$, the set of fixed points $\Fix(g)$ is clopen in $X$.
\end{definition}

\begin{proposition}\label{prop:ud_implies_ample}
Let $G\leq \Homeo(X)$ be uniformly discrete. Then $\Full(G)$ is ample.
\end{proposition}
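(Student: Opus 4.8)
The plan is to verify the four defining properties of an ample group for $H := \Full(G)$ directly. By Lemma~\ref{lem:basic_observations}(i), $H$ is uniformly discrete with respect to the same clopen partition $\mc{P}$ witnessing uniform discreteness of $G$; since $\mc{P}$ is a clopen cover of the compact space $X$ it is finite, say with $k$ parts, and I will use this $\mc{P}$ throughout. Two of the properties are essentially free: $H$ is piecewise full because $\Full(\Full(G)) = \Full(G)$, and $H$ is countable because a uniformly discrete group is discrete (Lemma~\ref{lem:basic_observations}(ii)) and a discrete subset of the separable metrizable group $\Homeo(X)$ is countable. The two substantive properties are that fixed-point sets are clopen and that $H$ is locally finite.

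For the clopen fixed-point condition, fix $h \in H$; then $\Fix(h)$ is closed since $h$ is continuous and $X$ is Hausdorff, so it remains to show it is open. I would argue by contradiction: if some $x \in \Fix(h)$ is not interior to $\Fix(h)$, choose $x_n \to x$ with $h x_n \neq x_n$. By continuity $h x_n \to h x = x$, so, letting $U \in \mc{P}$ be the part containing $x$, for all large $n$ both $x_n$ and $h x_n$ lie in $U$; but they are distinct points of the single orbit $H x_n$, contradicting $|H x_n \cap U| \le 1$. Hence $\Fix(h)$ is clopen.

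For local finiteness the key observation is that uniform discreteness provides a single finite set into which every orbit embeds compatibly. Writing $q \colon X \to \mc{P}$ for the map sending a point to the part containing it, uniform discreteness says exactly that $q$ is injective on every $H$-orbit; in particular every orbit has size at most $k$. Let $K = \grp{h_1, \dots, h_m}$ be a finitely generated subgroup; it is again uniformly discrete with respect to $\mc{P}$. For each orbit $O$ I transport the $K$-action on $O$ through the bijection $q|_O \colon O \to q(O)$ to obtain a homomorphism $\rho_O \colon K \to \Sym(q(O)) \le \Sym(\mc{P})$. The point is that the \emph{type} $(q(O), \rho_O)$ takes only finitely many values: there are finitely many subsets of the finite set $\mc{P}$, and since $K$ is finitely generated there are only finitely many homomorphisms from $K$ into the finite group $\Sym(\mc{P})$. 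Orbits of the same type contribute identical coordinates, so the map $g \mapsto (\rho_O(g))_O$, which is injective by faithfulness of the $H$-action, factors through the product over the finitely many types, embedding $K$ into a finite direct product of finite symmetric groups. Hence $K$ is finite and $H$ is locally finite.

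I expect the local finiteness step to be the main obstacle, and precisely where care is needed: knowing only that all orbits are finite of bounded size merely tells us that $K$ has bounded exponent, which by the Burnside problem is far from enough to conclude $K$ is finite. What rescues the argument is the global partition $\mc{P}$ supplied by uniform discreteness — it forces all of the a priori infinitely many orbit-actions of $K$ to be sorted into finitely many types, collapsing the embedding $K \hookrightarrow \prod_O \Sym(q(O))$ into one with finite codomain. The remaining routine points (finiteness and disjointness of $\mc{P}$, well-definedness of $\rho_O$, and counting homomorphisms out of a finitely generated group) I would dispatch quickly.
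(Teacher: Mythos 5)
Your proof is correct, and on the two substantive points it takes essentially the paper's route in different clothing. Your local finiteness argument is the paper's argument for the local finiteness of $\prod_{X/G}\Sym(\mc{P})$: sorting the orbits into finitely many types $(q(O),\rho_O)$ is exactly the paper's pigeonhole on the $(|\mc{P}|!)^m$ possible tuples $(\pi_a(h_1),\dots,\pi_a(h_m))$, since a homomorphism out of $K=\grp{h_1,\dots,h_m}$ into $\Sym(\mc{P})$ is determined by such a tuple; and your clopen-fixed-point argument is the sequential contrapositive of the paper's direct one (there, $V_x=gU\cap U$ is a clopen neighbourhood of $x\in\Fix(g)$ fixed pointwise, because $y=gz$ with $y,z$ in the same part of $\mc{P}$ forces $y=z$) — your version additionally leans on metrizability of $X$, which is harmless here. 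The one genuine divergence is countability: you get it for free from Lemma~\ref{lem:basic_observations}(ii) together with the fact, recorded in Section~\ref{sec:prelims}, that $\Homeo(X)$ is Polish, hence second countable, so a subgroup in which $H_{(\mc{P})}$ is trivial is a discrete, therefore countable, subspace. This is shorter than the paper's combinatorial count, which shows that every finite subgroup acts faithfully on one of countably many finite refinements $\mc{Q}$ of $\mc{P}$, with setwise stabilisers of parts equal to pointwise stabilisers; but that costlier route buys structural information that the paper reuses later — it is the germ of the direct-limit decomposition in Proposition~\ref{prop:udfull_direct_union_symmetrics} and of the argument in Proposition~\ref{prop:full_finite_iff_top_bratteli} — whereas your topological shortcut yields countability and nothing more. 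Your diagnosis of where the real content lies is also accurate: bounded orbit size alone would only bound the exponent of $K$, and it is precisely the single global partition $\mc{P}$ that collapses the infinitely many orbit actions into finitely many types.
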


\begin{proof}
	Let $G \le \Homeo(X)$ and suppose that $G$ is uniformly discrete with respect to $\mc{P}$. 
	Note (Lemma \ref{lem:basic_observations}(i)) that $\Full(G)$ is also uniformly discrete and has the same orbits on $X$ as $G$. 
	 From now on we  assume $G = \Full(G)$.
	
	Let $a\in X/G$ be a $G$-orbit on $X$ and let $\sigma_a:G \rightarrow \Sym(a)$ be the action of $G$ on $a$. 
	The orbits of $G$ on $X$ form a partition of $X$, and $G\leq \Homeo(X)$, so the homomorphism 
	$$G \rightarrow \prod_{a\in X/G} \Sym(a), \qquad g\mapsto (\sigma_a(g))_{a}$$ is injective. 
	Since $G$ is uniformly discrete, each $G$-orbit $a$ contains at most one point from each part of $\mc{P}$ and therefore $\Sym(a)$ can be identified with a canonical subgroup of $\Sym(\mc{P})$. 
	This gives an embedding of $G$ into the Cartesian product  $\Gamma:=\prod_{X/G}\Sym(\mc{P})$.
	
	Therefore $G$ is locally finite if $\Gamma$ is, which follows from a well-known argument: 
	Take $n\in\N$ and $g_1,\dots,g_n\in \Gamma$.
	For each $a\in X/G$, denote by $\pi_a$ the projection of $\Gamma$ onto the $a$th copy of $\Sym(\mc{P})$.
	There are $|\mathcal{P}|!$ possible values of $\pi_a(g_i)$ for each $i\in{1,\dots, n}$ and therefore 
	$m=(|\mc{P}|!)^n$ possible values for the $n$-tuple $(\pi_a(g_1), \ldots, \pi_a(g_n))$ as $a$ ranges over all $X/G$.
	For each $j\in\{1,\ldots,m\}$, denote by $H_j$ the subgroup of $\Sym(\mc{P})$ generated by the $j$th such $n$-tuple. 
	Then $\langle g_1,\ldots,g_n\rangle $ embeds in the finite direct product $H_1\times\dots\times H_m$ and is therefore finite, as required.

	To show that $G$ is countable, we use the fact that it is locally finite and hence a union of finite subgroups.
	Let $H<G$ be a finite subgroup of $G$ and suppose that $\mc{P}=\{U_1,\dots,U_n\}$. 
	Since $H$ is finite, the Boolean subalgebra of $\B$ generated by $\{hU_i\colon h\in H, i=1,\dots,n\}$ is finite and $H$-invariant. 
	Its atoms form a clopen partition $\mc{Q}$ which refines $\mc{P}$ and which is preserved by $H$. 
	Indeed, $H$ acts faithfully on the partition $\mc{Q}$: 
	if $h$ stabilises each part of $\mc{Q}$, then it also stabilises each part of $\mc{P}$ and the uniform discreteness of $G$ implies that $h$ must be trivial.
	In fact, uniform discreteness and the fact that $\mc{Q}$ refines $\mc{P}$ imply that the setwise stabiliser of $V\in\mc{Q}$ coincides with its pointwise stabiliser. 
	We conclude that every finite subgroup of $G$ acts faithfully as  a permutation group on some finite refinement $\mc{Q}$ of $\mc{P}$. There are finitely many possibilities for such permutation groups and fixed $\mc{Q}$ and countably many possibilities for $\mc{Q}$. 
	Therefore $G$ must be countable. 
	
	It remains to show, given $g \in G$, that $\Fix(g)$ is clopen.  In fact we need only show that $\Fix(g)$ is open; the fact that $\Fix(g)$ is closed follows from the fact that $g$ is a homeomorphism.
	Suppose that $g\in G$ fixes a point $x\in X$ and let $U$ be a clopen neighbourhood of $x$ which is entirely contained in some part of $\mc{P}$. 
	Since $x$ is fixed by $g$, the intersection $V_x:=gU\cap U$ is a non-empty clopen subset of $X$. 
	Given any $y\in V_x\setminus\{x\}$, there exists $z\in U\setminus\{x\}$ such that $y=gz$. 
	Since $G$ is uniformly discrete and $y,z\in U$ are in the same part of $\mc{P}$, we must have $y=z$.
	Thus $g$ fixes the clopen neighbourhood $V_x$ of $x$. 
	We therefore obtain that $\Fix(g)=\bigcup_{x\in \Fix(g)}V_x$ is open, as required. 	
\end{proof}

In particular,  uniformly discrete groups are locally finite and residually finite, which makes them LERF, or subgroup separable (every finitely generated subgroup is the intersection of finite-index subgroups that contain it).

The converse of Proposition \ref{prop:ud_implies_ample} is not true, since not every ample group is uniformly discrete.
For example, the direct union $\varinjlim_n \Sym (2^n)$ of symmetric groups on levels of the rooted binary tree is an ample group but is not uniformly discrete, since each point of the boundary of the tree has an infinite orbit. 
For those familiar with Thompson's group $V$, this is the subgroup consisting of those elements of $V$ that preserve the standard probability measure on the Cantor set, or equivalently (if $V$ is defined in terms of piecewise linear transformations of the unit interval) the subgroup of elements of $V$ in which every segment has slope $1$.

 Moreover, the following example shows that an ample group can have uniformly bounded orbits on the Cantor set, without being uniformly discrete.

\begin{example}\label{ex:c2_not_ud}
	Consider the Cantor set $X$ obtained as right-infinite words over the alphabet $\{0,1\}$.
	For each $n \ge 0$, denote by $g_n$ the homeomorphism of $X$ that exchanges the prefixes $1^n00$ and $1^n01$, leaving the rest of $X$ fixed pointwise (where $1^n$ denotes the word of length $n$ all of whose letters are 1).  Let $g_{\infty}$ be the homeomorphism of $X$ that exchanges the prefixes $1^n00$ and $1^n01$ for all $n \ge 0$.
	Let $G_1 = \langle g_{\infty} \rangle$ and let $G_2=\Full(\langle g_n, n\in \N\rangle)$.
	Then $G_1$ and $G_2$ have the same orbits, all of which have size at most 2.
	The group $G_1$ is finite but $\Full(G_1)$ is not ample, since its set of fixed points is the singleton $\{1^{\infty}\}$, which is not open.  On the other hand, $G_2$ is ample.
	Neither $G_1$ nor $G_2$ is uniformly discrete: in both cases, any partition of $X$ must have a part containing all words starting with $1^n$ for some $n\in\N$ and we see that two such points lie in the same orbit of $G_i$ for $i=1,2$.
\end{example}

On the other hand, given a \emph{finite} group $F$ of homeomorphisms in which the elements have clopen fixed points, there is a uniformly discrete invariant partition $\mc{P}$ for the action of $F$; that is, $F$ permutes $\mc{P}$ in such a way that the setwise stabiliser of each part coincides with its pointwise stabiliser. 
 The argument is taken from the proof of \cite[Lemma 2.1]{krieger}:

\begin{lemma}\label{lem:finite_clopen}
Let $F$ be a finite subgroup of $\Homeo(X)$ such that $\Fix(f)$ is clopen for all $f \in F$. 
 Then there is an $F$-invariant partition $\mc{P}$ of $X$ such that the setwise stabiliser in $F$ of each part coincides with its pointwise stabiliser.
\end{lemma}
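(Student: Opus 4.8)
The plan is to obtain $\mc{P}$ as the set $\atom(\mc{A})$ of atoms of a carefully chosen finite, $F$-invariant subalgebra $\mc{A}$ of $\B$. The guiding idea is that each part should be small enough that any $f \in F$ preserving it setwise is forced to fix it pointwise, and the separation needed for this should be arranged locally around each point before being globalised by compactness.

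First I would construct, for each $x \in X$, a clopen neighbourhood $V_x$ of $x$ such that, for every $f \in F$, either $f$ fixes $V_x$ pointwise or $f(V_x) \cap V_x = \emptyset$. To do this, I split $F$ according to its action on $x$. If $f(x) = x$ then $x \in \Fix(f)$, which is clopen by hypothesis, so $x$ has a clopen neighbourhood contained in $\Fix(f)$, on which $f$ is the identity. If $f(x) \neq x$ then, choosing disjoint clopen sets $A \ni x$ and $B \ni f(x)$ (possible as $X$ is Hausdorff and totally disconnected) and setting $U_f := A \cap f^{-1}(B)$, I obtain a clopen neighbourhood of $x$ with $U_f \cap f(U_f) = \emptyset$. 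Intersecting these $|F|$ clopen neighbourhoods of $x$ gives the desired $V_x$: indeed $V_x \subseteq \Fix(f)$ whenever $f(x) = x$, and $V_x \cap f(V_x) \subseteq U_f \cap f(U_f) = \emptyset$ whenever $f(x) \neq x$.

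Next, since $\{V_x : x \in X\}$ is an open cover of the compact space $X$, a finite subcover $V_{x_1}, \dots, V_{x_k}$ suffices. Let $\mc{A}$ be the finite subalgebra of $\B$ generated by the finite set $\{f(V_{x_i}) : f \in F,\ 1 \le i \le k\}$. This generating set is $F$-invariant, so $\mc{A}$ is $F$-invariant, and therefore $F$ permutes $\mc{P} := \atom(\mc{A})$. For the stabiliser condition, note that each atom $W$ satisfies $W \subseteq V_{x_i}$ or $W \cap V_{x_i} = \emptyset$ for every $i$, since each $V_{x_i}$ lies in $\mc{A}$; as the $V_{x_i}$ cover $X$ and $W \neq \emptyset$, we get $W \subseteq V_{x_i}$ for some $i$. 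Now suppose $f \in F$ satisfies $f(W) = W$. By the defining property of $V_{x_i}$, either $f$ fixes $V_{x_i}$ (hence $W$) pointwise, or $f(V_{x_i}) \cap V_{x_i} = \emptyset$; in the latter case $W = f(W) \subseteq f(V_{x_i})$ while $W \subseteq V_{x_i}$, forcing $W = \emptyset$, a contradiction. Hence $f$ fixes $W$ pointwise, so the setwise stabiliser of $W$ is contained in, and therefore equals, its pointwise stabiliser.

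I do not anticipate a serious obstacle here; the single point requiring care is to generate $\mc{A}$ from an \emph{$F$-invariant} family (the $F$-translates of the $V_{x_i}$), so that the $F$-invariance of the resulting partition and the local separation property of the $V_x$ can both be exploited at the same time.
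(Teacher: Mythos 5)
Your argument is correct, and it reaches the conclusion by a genuinely different route from the paper, whose proof (following Krieger) is orbit-theoretic rather than local. The paper stratifies $X$, for each $f\in F$ of order $n$ and each divisor $p$ of $n$, into the clopen sets $A_p(f)=\Fix(f^p)\setminus\bigcup_{q<p,\,q\mid p}\Fix(f^q)$ of points whose $f$-orbit has size exactly $p$, further splits each $A_p(f)$ into clopen pieces $A_{p,i}(f)$ cyclically permuted by $f$, and generates the invariant algebra from all $F$-translates of the resulting algebras $\B(f)$; a part preserved by $f$ then lies in some $A_{p,i}(f)$ and hence meets $A_{p,i+1}(f)$, forcing $p=1$, i.e.\ the part lies in $\Fix(f)$. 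You instead arrange the dichotomy pointwise: each $x$ gets a clopen $V_x$ on which every $f\in F$ either acts trivially (via clopenness of $\Fix(f)$) or which $f$ moves entirely off itself (via separation by clopen sets), and then compactness plus generating the algebra from the $F$-translates of a finite subcover does the rest. The endgame is the same in both proofs — take atoms of a finite algebra generated by an $F$-invariant family, and let each atom inherit the dichotomy from a containing generator — but your local construction sidesteps the one step the paper leaves implicit, namely that $A_p(f)$ really can be partitioned into clopen sets $A_{p,i}(f)$ with $f(A_{p,i}(f))=A_{p,i+1}(f)$ modulo $p$, which itself requires a small compactness argument about finite-order homeomorphisms of zero-dimensional compacta. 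What the paper's route buys is this extra equivariant cycle structure on each orbit-size stratum (not actually used elsewhere in the paper); what yours buys is a shorter and more elementary argument whose only inputs are clopenness of fixed-point sets, Hausdorff separation and compactness.
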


\begin{proof}
Given $f\in F$ of order $n$ and a divisor $p$ of $n$, denote by $A_p(f)$ the set of points in $X$ whose $f$-orbit has size exactly $p$; in other words, $$A_p(f)=\Fix(f^p)\setminus \bigcup_{q<p, q\mid p}\Fix(f^q),$$
	 which makes it plain that $A_p(f)$ is clopen. 
	Each $A_p(f)$ can be partitioned further into clopen subsets $A_{p,i}(f)$ for $i=0,\dots,p-1$ such that $f(A_{p,i}(f))=A_{p,i+1}(f)$ modulo $p$, for each $i$. 
	Thus we obtain a partition of 
	$$X=\bigsqcup_{p|n, i\in\mathbb{Z}/p\mathbb{Z}} A_{p,i}(f).$$
	Denote by $\B(f)$ the Boolean subalgebra generated by the above partition. 
	The atoms of the Boolean subalgebra generated by $\{h\B(f)\colon h,f\in F \}$ form a clopen partition $\mc{P}$ of $X$ that is $F$-invariant. 
	If a part $U$ of $\mc{P}$ is preserved by some $f\in F$, then $U$ is contained in some $A_{p,i}(f)$ and we have $U\subseteq A_{p,i+1}(f)\cap A_{p,i}(f)\neq \emptyset$. 
	This can only occur if $p=1$, that is, if $U\subseteq \Fix(f)$.
	Thus the setwise stabiliser in $F$ of each part of $\mc{P}$ coincides with its pointwise stabiliser.
\end{proof}

We will investigate further the connection between finite groups and uniformly discrete ample groups in Section~\ref{sec:finite_origin}.

\section{Uniformly discrete groups among ample groups}

We now address the issue of distinguishing uniformly discrete groups among ample groups and describing their structure as locally finite groups.
We start by giving an algebraic description, by adapting an argument from \cite[Lemma 2.1]{krieger}.

Given a Boolean subalgebra $\mc{A}$ of $\mc{B}$, call a group $G\leq \Homeo(X)$ ($\cong\Aut(\B)$) \defbold{piecewise full on $\mc{A}$} if it leaves $\mc{A}$ invariant and $G = \Full_{\mc{A}}(G)$.

\begin{proposition}[See Lemma 2.1 of \cite{krieger}]\label{prop:udfull_direct_union_symmetrics}
Let $G\leq \Homeo(X)\cong\Aut(\B)$ be an ample group. 
Given any decomposition of  $G=\bigcup_{n\in\mathbb{N}}H_n$ as a direct union of finite subgroups $H_n$, % and of $\B=\bigcup_{n\in\NN}\mc{A}_n$ as an increasing union of finite Boolean subalgebras, 
there exist finite subgroups $G_n\leq G$ and finite subalgebras $\mc{B}_n\leq \B$ such that for each $n\in \N$:
\begin{enumerate}
\item $G_n\geq H_n$, %and $\mc{B}_n\supseteq\mc{A}_n$,
\item $\mc{B}_n$ is $H_n$-invariant,% and its atoms form a refinement of $A$,
\item $G_n$ is piecewise full on $\mc{B}_n$ and the setwise stabiliser in $G_n$ of any atom of $\mc{B}_n$ coincides with the pointwise stabiliser,
\item $G_n=\bigoplus_{i=1}^{m_n}\Sym(\mc{O}_i)$ where the $\mc{O}_i$ range over the $H_n$-orbits on $\atom(\mc{B}_n)$,
\item the embeddings $G_n\hookrightarrow G_{n+1}$ are block-diagonal: for each $H_n$-orbit $\mc{O}$, the factor $\Sym(\mc{O})$ embeds diagonally in $\bigoplus_{j=1}^{s} \Sym(\mc{O}_j)\leq \bigoplus_{i\in I}\Sym(\mc{Q}_i)$ where $I\subseteq \{1,\dots,m_{n+1}\}$. 
Each $\mc{O}_j$ is a $H_n$-orbit on $\atom(\mc{B}_{n+1})$, permutation-isomorphic to $\mc{O}$; in turn, each $\mc{O}_j$ is contained in some $H_{n+1}$-orbit $\mc{Q}_i$, inducing the natural embedding $\Sym(\mc{O}_j)\leq \Sym(\mc{Q}_i)$.
\end{enumerate}

Moreover, if $G$ is uniformly discrete with respect to the clopen partition $P=\{U_1, \ldots, U_k\}$, then $G_n$ and $\mc{B}_n$ can be found such that $\atom(\mc{B}_n)$ is a refinement of $P$ and each $\mc{O}_i$ consists of at most $k$ atoms, each in a different part of $P$.%
\end{proposition}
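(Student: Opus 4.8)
The plan is to construct the data $(\mc{B}_n,G_n)$ by induction on $n$, producing a nested chain of finite subalgebras and setting $G_n := \Full_{\mc{B}_n}(H_n)$. The engine is Lemma~\ref{lem:finite_clopen} together with the elementary observation that the property ``setwise stabiliser $=$ pointwise stabiliser of each part'' is inherited by any $F$-invariant refinement: if $\mc{Q}$ refines an $F$-invariant partition $\mc{P}$ enjoying this property and $f\in F$ fixes $W\in\mc{Q}$ setwise, then $f$ fixes setwise the unique part of $\mc{P}$ containing $W$, hence fixes it, and so $W$, pointwise. I would start with $\mc{B}_0$ equal to the algebra furnished by Lemma~\ref{lem:finite_clopen} for $H_0$; at stage $n+1$ I would let $\mc{B}_{n+1}$ be generated by the $H_{n+1}$-translates of $\mc{B}_n$ together with the Lemma~\ref{lem:finite_clopen}-partition for $H_{n+1}$. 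This keeps $\mc{B}_{n+1}$ finite, $H_{n+1}$-invariant, and refining both $\mc{B}_n$ and the $H_{n+1}$-partition, so by the observation it has the stabiliser property (giving (2) and the stabiliser half of (3)), and $\mc{B}_n\subseteq\mc{B}_{n+1}$.

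With $G_n := \Full_{\mc{B}_n}(H_n)$, properties (1)--(4) should fall out quickly. Since $\mc{B}_n\subseteq\mc{B}$ and $H_n\leq G$, one has $G_n\leq\Full(H_n)\leq\Full(G)=G$, and trivially $H_n\leq G_n$, giving (1); idempotence of $\Full_{\mc{B}_n}(-)$ gives that $G_n$ is piecewise full on $\mc{B}_n$, completing (3). The structural point for (4) is that the atom-permutation map $G_n\to\Sym(\atom(\mc{B}_n))$ is a faithful embedding onto $\bigoplus_i\Sym(\mc{O}_i)$: an element of $G_n$ carries each atom $V$ to $h(V)$ for some $h\in H_n$, hence preserves every $H_n$-orbit $\mc{O}_i$; conversely every orbit-preserving atom-permutation is realised piecewise from $H_n$; and faithfulness is exactly the stabiliser property, since an element fixing all atoms setwise fixes them pointwise.

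The delicate part, and the one I expect to be the main obstacle, is the block-diagonal description (5). Fix an $H_n$-orbit $\mc{O}$ on $\atom(\mc{B}_n)$ and a representative atom $V\in\mc{O}$; write $S_V$ for the set of $\mc{B}_{n+1}$-atoms contained in $V$. The crucial input is that, by the stabiliser property at level $n$, $\Stab_{H_n}(V)$ fixes $V$ pointwise and therefore fixes $S_V$ pointwise. From this I would deduce that for each $W\in S_V$ the stabilisers $\Stab_{H_n}(W)$ and $\Stab_{H_n}(V)$ coincide, so the $H_n$-orbit of $W$ maps permutation-isomorphically onto $\mc{O}$ via $W'\mapsto$ (the $\mc{B}_n$-atom containing $W'$). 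Hence the $\mc{B}_{n+1}$-atoms inside $\bigcup\mc{O}$ split into $s=|S_V|$ many $H_n$-orbits $\mc{O}_1,\dots,\mc{O}_s$, each permutation-isomorphic to $\mc{O}$. Since an element $g\in\Sym(\mc{O})\leq G_n$ acts on a sub-atom $W\subseteq V$ by $g(W)=h_V(W)$ with $h_V\in H_n$, it preserves each $\mc{O}_j$ and its action there intertwines with its action on $\mc{O}$ under the above isomorphism; this is precisely the assertion that $\Sym(\mc{O})$ sits diagonally in $\bigoplus_{j=1}^s\Sym(\mc{O}_j)$. Finally each $\mc{O}_j$, being an $H_n$-orbit, lies inside a single $H_{n+1}$-orbit $\mc{Q}_i$ since $H_n\leq H_{n+1}$, yielding the natural inclusion $\Sym(\mc{O}_j)\leq\Sym(\mc{Q}_i)$ and hence (5); the inclusions $G_n\leq G_{n+1}$ and the identity $\bigcup_nG_n=G$ are then routine from $\mc{B}_n\subseteq\mc{B}_{n+1}$ and $H_n\leq H_{n+1}$.

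For the final ``Moreover'' claim I would feed the uniform-discreteness partition $P$ into the construction: at every stage also throw the $H_n$-translates $\{h(U):h\in H_n,\ U\in P\}$ into the generators of $\mc{B}_n$. Then each $\mc{B}_n$ contains $P$, so $\atom(\mc{B}_n)$ refines $P$, while the stabiliser property is unaffected, as it only requires $\mc{B}_n$ to refine the Lemma~\ref{lem:finite_clopen}-partition and be $H_n$-invariant. The orbit bound is then immediate from Definition~\ref{def:ud}: if two distinct atoms $V_a,V_b$ of a single $H_n$-orbit lay in the same part $U\in P$, picking $h\in H_n$ with $h(V_a)=V_b$ and any $x\in V_a$ would give distinct points $x,h(x)\in Gx\cap U$, contradicting $|Gx\cap U|\leq 1$. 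Hence the atoms of each $\mc{O}_i$ lie in distinct parts of $P$, and there are at most $k=|P|$ of them.
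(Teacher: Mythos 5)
Your proposal is correct and follows essentially the same route as the paper's proof: the same inductive construction of $\mc{B}_{n+1}$ from the Lemma~\ref{lem:finite_clopen} partition together with the $H_{n+1}$-translates of $\mc{B}_n$, with $G_n := \Full_{\mc{B}_n}(H_n)$, the refinement-inheritance of the stabiliser property, the stabiliser-coincidence argument giving the diagonal embeddings in (v), and the insertion of $P$ (or a refinement of it) into the construction for the uniformly discrete case. The only cosmetic differences are that the paper obtains $G_n \supseteq \bigoplus_i \Sym(\mc{O}_i)$ via explicit piecewise ``transpositions'' rather than your direct surjectivity argument for the atom-permutation map, and it labels the sub-atoms as $A_{i,j} = h_i A_{1,j}$ instead of invoking $\Stab_{H_n}(W) = \Stab_{H_n}(V)$, both of which encode the same observation.
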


\begin{proof}
We can assume $H_0 = \{1\}$.  We proceed inductively, starting with $G_0=\{1\}$ and $\mc{B}_0=\{X,\emptyset\}$.  If $G$ is uniformly discrete, fix a clopen partition $P = \{U_1,\dots,U_k\}$ of $X$ with respect to which $G$ is uniformly discrete.

Suppose that suitable $G_n$ and $\B_n$ have been found.
By Lemma~\ref{lem:finite_clopen} there is an $H_{n+1}$-invariant partition $\mc{P}_{n+1}$, such that for each part, the setwise stabiliser in $H_{n+1}$ coincides with the pointwise stabiliser.
If $G$ is uniformly discrete, we also choose $\mc{P}_{n+1}$ to be a refinement of $P$. 
Take $\B_{n+1}$ to be the Boolean algebra generated by $\mc{P}_{n+1}$ and all $H_{n+1}$-translates of $\B_n$.
By construction, $\atom(\B_{n+1})$ is an $H_{n+1}$-invariant clopen partition of $X$ that refines both $\mc{P}_{n+1}$ and $\atom(\B_n)$ (and also $P$ if $G$ is uniformly discrete).
This implies that the setwise stabiliser in $H_{n+1}$ of an atom of $\B_{n+1}$ must in fact be its pointwise stabiliser.

Since $G$ is piecewise full on $X$, in particular it contains $G_{n+1} := \Full_{\B_{n+1}}(H_{n+1})$.
The group $G_{n+1}$ is finite and inherits from $H_{n+1}$ the property that the setwise stabiliser of an atom of $\B_{n+1}$ is its pointwise stabiliser.

To see the fourth item, consider the orbits $\mc{O}_1, \dots, \mc{O}_{m_{n+1}}$ of $H_{n+1}$ on $\atom(\B_{n+1})$. 
Given such an orbit $\mc{O}_i$, and some $h\in H_{n+1}$ taking $V \in \mc{O}_i$ to another element $W\in\mc{O}_i$, the piecewise full group $G_{n+1}$ contains the ``transposition''
$$g(x)=\begin{cases}
h(x), \text{ if } x\in V,\\
h^{-1}(x), \text{ if } x\in W=h(V),\\
x, \text{ else }
\end{cases}$$
that only swaps $V$ and $W$. 
Thus $G_{n+1}$ contains $\Sym(\mc{O}_i)$ for each $(H_{n+1})$-orbit $\mc{O}_i$ and, since these orbits are disjoint, $G_{n+1}\geq \bigoplus_{i=1}^{m_{n+1}}\Sym(\mc{O}_i)$. 
The fact that $G_{n+1}$ is generated by its subgroups $\Sym(\mc{O}_i)$ for $1 \le i \le m_{n+1}$ then follows from the construction of $\B_{n+1}$ and $G_{n+1}$ in terms of $H_{n+1}$.
If $G$ is uniformly discrete, each orbit consists of at most $k$ atoms, since each one must be in a different part of $P$.

Let us now see why the embeddings are block-diagonal. 
Let $\mc{O}$ be an $H_n$-orbit on $\atom(\B_n)$ (it is also a $G_n$-orbit). 
Then $\mc{O}$ consists of atoms $A_1,\dots, A_r$ of $\B_n$
(in the uniformly discrete case, each $A_i$ is contained in a different part of $P$, so after relabelling we may assume that $A_i\subseteq U_i$, $1 \leq i\leq r\leq k$).
Say $A_i = h_iA_1$ for $h_i \in H_n$.  
In turn, each $A_i$ is a join of a subset $\{A_{i,1}, \ldots, A_{i,s}\}\subseteq \atom(\B_{n+1})$, and since $\atom(\B_{n+1})$ is $H_n$-invariant, we can label these atoms so that $A_{i,j} = h_iA_{1,j}$. 
In particular, $s$ does not depend on $i$ and $A_{i,j}$ is in the same $H_{n+1}$-orbit as $A_{1,j}$ for all $i$ and $j$.
Denote by $\mc{O}_j$ the $H_{n}$-orbit of $A_{i,j}$ for $1\leq j\leq s$ and $1\leq i \leq r$. 

By (iii),  the setwise stabiliser of each $A_i$ in $\Sym(\mc{O}) \le G_n$ is equal to its point stabiliser.
This is clearly also true for the subsets $A_{i,j}$. 
Indeed, we see that the stabiliser of $A_i$ in $\Sym(\mc{O})$ is the same as the stabiliser of $A_{i,j}$ in $\Sym(\mc{O})$ for $1 \le j \le s$. 
Thus for each $1 \le j \le s$, the action of $\Sym(\mc{O})$ on $\mc{O}_j$ is permutationally equivalent to its action on $\mc{O}$. 
At the same time, $\Sym(\mc{O})$ clearly fixes pointwise any atom of $\B_{n+1}$ outside of $\bigcup^s_{j=1}\mc{O}_j$, showing that $\Sym(\mc{O})$ is embedded in $\bigoplus_{j=1}^{s}\Sym(\mc{O}_j)$ as a diagonal subgroup.

Now, each $\mc{O}_j$ is contained in some $H_{n+1}$-orbit $\mc{Q}_j$ on $\atom(\mc{B}_{n+1})$. 
If $G$ is uniformly discrete and $1\leq j<j'\leq s$, then $\mc{O}_j$ and $\mc{O}_j'$ must be in different $H_{n+1}$-orbits, since they are in different parts of $P$, yielding the embedding $\Sym(\mc{O})\hookrightarrow \bigoplus_{j=1}^{s}\Sym(\mc{O}_j)\leq \bigoplus_{j=1}^{s}\Sym(\mc{Q}_j)\leq \bigoplus_{j=1}^{m_{n+1}}\Sym(\mc{Q}_j).$

If $G$ is not uniformly discrete then several $\mc{O}_j$ may lie in a single $H_{n+1}$-orbit on $\atom(\mc{B}_{n+1})$. 
Suppose that $\mc{O}_j$ and $\mc{O}_{j'}$ lie in the same $H_{n+1}$-orbit, $\mc{Q}_j$. 
Then the induced embedding
$\Sym(\mc{O})\hookrightarrow \Sym(\mc{O}_j)\oplus \Sym(\mc{O}_{j'})\leq \Sym(\mc{Q}_j)$
is diagonal in the sense that $\Sym(\mc{O})$ embeds diagonally into $\Sym(\mc{O}_j)\oplus \Sym(\mc{O}_{j'})$ with isomorphic actions, and 
this is preserved by the natural embedding into the larger $\Sym(\mc{Q}_j)$.
\end{proof}

Thus ample groups and, in particular, uniformly discrete piecewise full groups, are direct limits of direct products of symmetric groups. 
These groups, and versions with alternating groups instead of symmetric groups, have been studied in \cite{DudkoMedynets}, \cite{HartleyZalesskii} and \cite{lavnek}, respectively. 
However, the focus there is on simple groups, as seems to be the case with much of the literature on direct limits of symmetric or alternating groups (see references in cited items).

In the rest of this section, we translate the above description into the language of Bratteli diagrams and then dimension ranges and use it to distinguish uniformly discrete groups within the class of ample groups.

\subsection{Bratteli diagrams}\label{ssec:Bratteli}

Bratteli diagrams are graphs that provide an intermediate (and usually easy to describe) step between ample groups and the algebraic invariant encoding the orbit system -- the dimension range, as considered in \cite{krieger}, which we shall only deal with briefly here.  
These objects are very familiar to operator algebraists and scholars of Cantor dynamics, as they can be used to classify AF (approximately finite-dimensional) $C^*$-algebras and the dynamical systems that can be associated to them.
%The a combinatorial (and usually easy to describe) link between the dynamical system afforded by the ample group and its algebraic invariant given by the dimension range. 
A further advantage of considering Bratteli diagrams is that the uniform discreteness condition has a natural combinatorial translation in the diagram. 

For the benefit of the uninitiated, and to set notation, we recall how to take this intermediate step from an ample group and, in Section \ref{ssec:dimension_range}, how to obtain the dimension range of the ample group from it. 
Much of the notation and terminology here follows that used in \cite{lavnek}.

Suppose that $G\leq \Aut(\B)$ is an ample group with $G=\bigcup_{n\in \N} G_n$ and $\B=\bigcup_{n\in \N} \B_n$ where $G_n$ is finite and piecewise full on the finite subalgebra $\B_n\leq \B$ (as established in Proposition~\ref{prop:udfull_direct_union_symmetrics});
 $G_0$ is trivial and $\B_0$ is the 2-element Boolean algebra, corresponding to the clopen subsets $X$ and $\emptyset$.
To this situation we associate the following $\N$-graded graphs $\tilde{B}=(\tilde{V},\tilde{E},s,r)$ and $B=(V,E,s,r,d)$:
\begin{itemize}
	\item $\tilde{V}=\bigsqcup_{n\in\N}\tilde{V}_n$ where $\tilde{V}_n = \atom(\B_n)$.
	\item $\tilde{E}=\bigsqcup_{n\geq 1}\tilde{E}_n$ where $\tilde{E}_n$ consists of edges that represent containment of atoms, determined by \defbold{source} $s\colon \tilde{E}_n\rightarrow \tilde{V}_{n-1}$ and \defbold{range} $r\colon \tilde{E}_n\rightarrow \tilde{V}_n$ maps. 
	That is, if the atom corresponding to $\tilde{v}_n$ is contained in that corresponding to $\tilde{v}_{n-1}$ there is a unique edge $\tilde{e}_n\in \tilde{E}_n$ such that $s(\tilde{e}_n)=\tilde{v}_{n-1}$ and $r(\tilde{e}_n)=\tilde{v}_n $.
	\item  For each $n$, denote by $\pi_n\colon \tilde{V}_n \rightarrow V_n$ the quotient of $\tilde{V}_n$ by the induced action of $G_n$, coming from that on $\B_n$. 
	Since $G_n$ also acts on $\B_{n+1}$, the map $\pi_n$ induces a quotient map on the edges
	$\pi_{n+1}\colon \tilde{E}_{n+1} \rightarrow E_{n+1}$  where $\pi_{n+1}(\tilde{e})=\pi_{n+1}(\tilde{f})$ if and only if $\pi_n(s(e))=\pi_n(s(f))$ and $\pi_n(r(e))=\pi_n(r(f))$.
	By abuse of notation, this last $\pi_n$ is the orbit quotient map of $G_n$ on $\tilde{V}_{n+1}$.
	\item For each $n$ and $v\in V_n$, denote by $d(v)$ the size of $\pi_n^{-1}(v)$ (i.e. the number of atoms of $\B_n$ in the $G_n$-orbit corresponding to $v$).
\end{itemize}
The graph $B$ is the \defbold{Bratteli diagram} associated to  $(G=\bigcup_n G_n, \B=\bigcup_n \B_n)$ while $\tilde{B}$ is the \defbold{extended Bratteli diagram} associated to the same pair. 

Figure \ref{fig:brattelieg} shows an example of  the first few levels of a Bratteli diagram and its corresponding extended Bratteli diagram.
\begin{figure}
\includegraphics{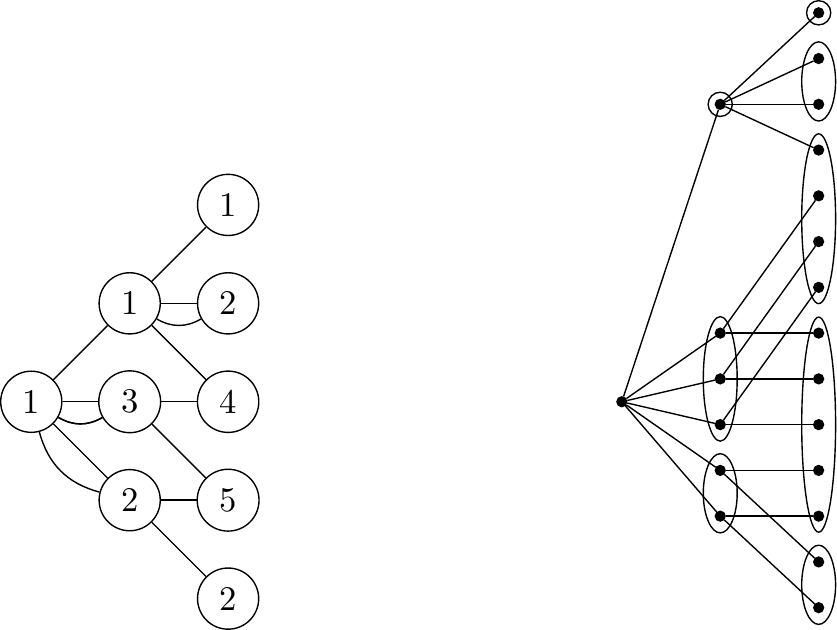}
\caption{First levels of a Bratteli diagram and its corresponding extended Bratteli diagram.}
\label{fig:brattelieg}
\end{figure}
Note that edges are directed and therefore so are paths: a path will always go from smaller $V_m$ or $\tilde{V}_m$ to larger $V_n$ or $\tilde{V}_n$.
Note also that, since the atoms of $\B_n$ are disjoint, no atom of $\B_{n+1}$ can be contained in two different atoms of $\B_n$; in particular, $\tilde{B}$ is a tree and each ultrafilter of $\B$ is uniquely given by an infinite path $(\tilde{v}_0, \tilde{v}_1, \tilde{v}_2, \tilde{v}_3, \dots)$ in $\tilde{B}$. 
Stone correspondence and the fact that $\B=\bigcup_{n\in\N} \B_n$ then yield a bijective correspondence between the infinite paths in $\tilde{B}$ and the points of $X$.
%Since $\B=\bigcup_n \B_n$ and each $x\in X$ is an ultrafilter in $\B$ (by Stone correspondence), we can recover each $x\in X$ as an infinite path $(\tilde{v}_0, \tilde{v}_1, \tilde{v}_2, \tilde{v}_3, \dots)$ in $\tilde{B}$.
The topology on the set of infinite paths whose base consists of all paths starting from each vertex $\tilde{v}$ coincides with the topology on $X$. 

Notice that if we are just given a Bratteli diagram $B=(V,E,s,r,d)$, we can obtain its corresponding extended Bratteli diagram by choosing sets $\tilde{V}_n$ of size $\sum_{v\in V_n}d(v)$,  surjections $\pi_n\colon \tilde{V}_n\rightarrow V_n$ and $\tilde{E}_{n+1}=\{(\tilde{v}_n,e_{n+1})\in\tilde{V}_{n}\times E_{n+1} \mid s(e)=\pi_n(\tilde{v}_n)\}$.
Assuming, as we do throughout the paper,  that $d(v)=\sum_{v=r(e)}d(s(e))$ for all $v\in V\setminus\{v_0\}$, there is a bijection, which can be taken to be a range map,  $r\colon \tilde{E}_{n+1}\rightarrow \tilde{V}_{n+1}$, such that $r((\tilde{v}_n,e_{n+1}))=\tilde{v}_{n+1}$ implies that $r(e)=\pi_{n+1}(\tilde{v}_{n+1})$.

One then obtains an ample group acting on the space of infinite paths of $\tilde{B}$ as follows: for each $n\in\N$, put $G_n=\bigoplus_{v\in V_n}\Sym(\pi_n^{-1}(v))$ and let it act also on $\tilde{E}_{n+1}$  by acting on the first entry of $(\tilde{v_n},e_{n+1})$. 
Since the range map $r\colon \tilde{E}_{n+1}\rightarrow \tilde{V}_{n+1}$ is an embedding, it induces an embedding $G_n\hookrightarrow G_{n+1}$. 
The direct limit $G=\varinjlim_{\N} G_n=\bigcup_{\N} G_n$ is an ample group of homeomorphisms of the infinite paths of $\tilde{B}$ (which is homeomorphic to  $X$ if $d(v)\geq 1$ for every $v\in V$). 

 Proposition~\ref{prop:udfull_direct_union_symmetrics} implies that, taking the Bratteli diagram associated to the decomposition $G=\bigcup_{\N} G_n$, $\B=\bigcup_{\N}\B_n$ given there, and then taking the ample group of that Bratteli diagram as described above returns $G$, with the same decomposition $G=\bigcup_{\N} G_n$, $\B=\bigcup_{\N}\B_n$.

There is a natural notion of equivalence between Bratteli diagrams, called \defbold{telescoping}.
Given a Bratteli diagram $B=(V,E,s,r,d)$ and a strictly increasing sequence $(m_n)_n$ of natural numbers, the telescoping of $B$ along this sequence is the graph $B'=(V',E',s,r,d')$ where $V'=\bigcup_{n}\left(V'_n=V_{m_n}\right)$, $d'$ is the restriction of $d$ to $V'$ and $E'_n$ consists of all paths $e_1e_2\dots e_{m_n-m_{n-1}}$ in $B$ starting at $V_{m_{n-1}}$ and ending at $V_{m_n}$.
The source of such a path is $s(e_1)$ and its range $r(e_{m_n-m_{n-1}})$. 
We will consider Bratteli diagrams up to the equivalence relation generated by telescoping.

If $G\leq\Homeo(X)$ is an ample group, taking  different decompositions $G=\bigcup G_n=\bigcup H_n$, $\B=\bigcup \B_n=\bigcup \mc{C}_n$ produces  equivalent Bratteli diagrams, because for each $n\in \N$ we have  $\B_n\leq \mc{C}_{m_n}\leq \B_{k_{m_n}}$ and $G_n\leq H_{m_n}\leq G_{k_{m_n}}$ for some sequences $m_n$ and $k_{m_n}$.
The space of infinite paths of each extended Bratteli diagram naturally corresponds to $X$ and the induced dynamical systems all coincide. 

Similarly, if $B'$ is a telescoping of $B$, then their associated ample groups coincide, since one is a direct limit of a subsequence of groups of the other one.

\begin{theorem}\label{thm:bratteli_condition}
	Let $G\leq \Homeo(X)$ be an ample group with associated Bratteli diagram $B = (V,E)$. 
	Then $G$ is uniformly discrete if and only if there is a telescoping $B' = (V',E')$ of $B$ such that
	$B'\setminus (V'_0\cup E'_1)$ is a multitree (that is, there are no multiple directed paths between any pair of vertices $v,u\in V'\setminus V_0$).
\end{theorem}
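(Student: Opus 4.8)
The plan is to convert uniform discreteness into a statement about directed paths in the diagram, by first describing the $G$-orbits on $X$ combinatorially. Write $G=\bigcup_n G_n$, $\B=\bigcup_n\B_n$ for a decomposition underlying $B$, so that $G_n=\bigoplus_{v\in V_n}\Sym(\pi_n^{-1}(v))$, and recall that a point of $X$ is an infinite path in $\tilde B$. An element of $\Sym(\pi_n^{-1}(v))\le G_n$ permutes the atoms of one $G_n$-orbit at level $n$ and carries the subtrees below them along while preserving edge-labels; hence for $g\in G_n$ the path $gx$ has the same edge-labels as $x$ below level $n$, and its level-$n$ vertex lies in the same fibre of $\pi_n$. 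This gives the key \emph{orbit relation}: writing $x=(\tilde v^x_m)_m$ and $y=(\tilde v^y_m)_m$, one has $x\in Gy$ if and only if there is some $n$ with $\pi_n(\tilde v^x_n)=\pi_n(\tilde v^y_n)$ and with the edge-labels of $x$ and $y$ agreeing at every level $m>n$. Alongside this I would record a \emph{path count}: since $\tilde B$ is a tree whose vertices have children carrying distinct $E$-labels, following a directed path of $B$ from a fixed representative $\tilde v\in\pi_i^{-1}(v)$ picks out a unique descendant, giving a bijection between directed paths in $B$ from $v\in V_i$ to $u\in V_j$ and those level-$j$ descendants of $\tilde v$ lying in the $G_j$-orbit $u$. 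Thus the multitree condition on $B'\setminus(V'_0\cup E'_1)$ is exactly the statement that, for every atom $\tilde v$ at level $\ge 1$ and every lower level, the descendants of $\tilde v$ at that level lie in pairwise distinct $G_j$-orbits; testing $\tilde v$ at level $1$ suffices, and the root is excluded precisely because the number of paths from $V'_0$ to $u$ is $d(u)$, which detects nothing about discreteness.

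For the backward implication I would assume a telescoping $B'$ of $B$ — which reconstructs the same $G$ — is multitree below level $1$, and take $P:=\atom(\B'_1)$. If $y_1,y_2\in Gx$ both lie in one part $\tilde u\in P$, the orbit relation places them at a common vertex $w\in V'_n$ with equal tails below level $n$; their level-$n$ atoms are then two descendants of $\tilde u$ in the orbit $w$, so the multitree condition forces these atoms to coincide, and equal tails give $y_1=y_2$. Hence $|Gx\cap\tilde u|\le 1$ and $G$ is uniformly discrete with respect to $P$.

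For the forward implication, suppose $G$ is uniformly discrete with respect to $P$. Since $\B=\bigcup_n\B_n$, the finitely many parts of $P$ lie in some $\B_M$, so $\atom(\B_M)$ refines $P$; I would telescope $B$ so that its first retained level is $M$, producing $B'$ whose level-$1$ atoms refine $P$, so that every representative atom at level $\ge 1$ of $B'$ sits inside a single part of $P$. If $B'$ had two directed paths from some $v$ to some $u$ with both at level $\ge 1$, the path count supplies distinct atoms $\tilde a\ne\tilde b$ in one orbit, both contained in one part $U\in P$; choosing $x$ through $\tilde a$ and applying a group element carrying $\tilde a$ to $\tilde b$ yields $y\ne x$ in $Gx$ with $x,y\in U$, contradicting uniform discreteness. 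Hence $B'$ is multitree below level $1$.

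The hard part is pinning down the orbit relation exactly: because $G_n$ does \emph{not} preserve the coarser subalgebras $\B_m$ for $m<n$, the equivalence is not literal tail-equivalence of paths but ``equality of tails together with membership in a common $G_n$-orbit at some finite level,'' and both implications hinge on this. The remaining effort is bookkeeping around telescoping — checking that the chosen telescoping simultaneously reconstructs $G$ and has its first level refining $P$ — which is routine given the discussion of telescoping preceding the statement.
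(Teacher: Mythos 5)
Your proposal is correct and takes essentially the same route as the paper's proof: the forward direction telescopes so that the level-$1$ atoms refine the uniform-discreteness partition and derives from a double path two distinct same-orbit atoms inside one part, while the backward direction takes $\atom(\B'_1)$ as the partition and uses uniqueness of paths to force orbit points in a common part to coincide. Your explicit ``orbit relation'' and ``path count'' lemmas are accurate formalisations of steps the paper performs inline (the paper's element $g\in G_n$ argument and its observation that a unique path from $\pi_1(\tilde{v})$ to $v_n$ forces $\tilde{u}_n=\tilde{v}_n$), so they add rigour but not a different method.
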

\begin{proof}
	Without loss of generality, we can assume that the Bratteli diagram $B$ is obtained from the decomposition $G=\bigcup_n G_n$, $\B=\bigcup_n \B_n$ as in Proposition \ref{prop:udfull_direct_union_symmetrics}.
	
	Suppose first that $G$ is uniformly discrete and that $X=\bigsqcup_{i=1}^k U_i$ is a partition witnessing this. 
	Note that  there is some $n$ such that the atoms of $\B_n$ form a refinement of $X=\bigsqcup_{i=1}^k U_i$. 
	Telescope the diagram if necessary to assume that $n=1$.

	Suppose that there are two paths in $B$ between $v\in V_m$ and $u\in V_l$ for $l> m\geq 1$. 
	This means that there is a vertex $\tilde{v}_m\in \pi^{-1}_m(v)$ and distinct vertices $\tilde{v}_l, \tilde{u}_l\in \pi^{-1}_l(u)$ such that the clopens corresponding to $\tilde{v}_l$ and $\tilde{u}_l$ are contained in the clopen corresponding to $\tilde{v}_m$. 
	By choice of $m$, the clopen corresponding to $\tilde{v}_m$ is contained in some part $U_i$, and therefore so are the clopens corresponding to $\tilde{v}_l$ and  $\tilde{u}_l$. 
	But $G$ is uniformly discrete with respect to $U_1, \dots, U_k$, so $\tilde{v}_l$ and  $\tilde{u}_l$ cannot lie in the same $G_l$-orbit, a contradiction. 
	
	Conversely, suppose that the Bratteli diagram $B$ associated to $G=\bigcup_n G_n, \B=\bigcup_n \B_n$ has %uniformly bounded $d(v)$	 and 
	no multiple paths, excluding the root. 
	The space $X$ of infinite paths of $\tilde{B}$ has a natural topology whose basic clopens are the sets $P(\tilde{v})$ of infinite paths starting at $\tilde{v}$ as $\tilde{v}$ ranges through $\tilde{V}$. % (CHECK THAT THIS GIVES A CANTOR SET, I.E., NO ISOLATED POINTS!! THIS MAY ALSO BE A CONSEQUENCE OF U.D.). 
	Then $X=\bigsqcup_{\tilde{v}\in \tilde{V}_1}P(\tilde{v})$ is a clopen partition of $X$.
	To see that it is a partition of uniform discreteness, take some $x\in X$, which is a member of $P(\tilde{v})$ for some $\tilde{v}\in\tilde{V}_1$. 
	Suppose that there is some $g\in G$ with $gx\in P(\tilde{v})$. 
	Then $g\in G_n$ for some $n$, so $x\in P(\tilde{v}_n)$ and $gx \in P(\tilde{u}_n)$ for some $\tilde{v}_n, \tilde{u}_n\in \tilde{V}_n$ such that $\pi_n(\tilde{u}_n)=\pi_n(\tilde{v}_n)=v_n$.
	Since there is only one path in $B$ from $\pi_1(\tilde{v})$ to $v_n$, we have $\tilde{u}_n = \tilde{v}_n$.
	  By the same argument, for all $n' \geq n$ there is $\tilde{v}_{n'} \in \tilde{V}_{n'}$ such that $\{x,gx\} \subseteq P(\tilde{v}_{n'})$. 
	 Hence $gx = x$ and $|Gx \cap  P(\tilde{v})|=1$, as required. 
\end{proof}

%Need to think how to explain more clearly how these two concepts are opposites.

The above condition on the Bratteli diagram also shows that uniformly discrete ample groups are residually finite and that their corresponding  dimension range (and also AF $C^*$-algebra) are residually finite-dimensional (for every non-trivial element there is a finite-dimensional quotient in which it has non-trivial image). 
We only give a detailed proof for the case of groups (in essence reproving Lemma~\ref{prop:finite_orbit_implies_resinf}). 
The same argument works for dimension ranges and AF-algebras, using the well-known 1-1 correspondence between order ideals of  dimension ranges,  ideals of AF-algebras (\cite[IV.5.1]{davidson_book}), and ideals of the corresponding Bratteli diagram (\cite[III.4.2]{davidson_book}). 

\begin{definition}
Let $B$ be a Bratteli diagram (\emph{not} an extended Bratteli diagram). 
A subset $S$ of $V(B)$ is an \defbold{ideal} of $B$ if it satisfies both of the following:
\begin{enumerate}
\item  If  $v=s(e)\in V(B)$ belongs to $S$ and $w=r(e)$ then $w\in S$.
\item Given $v \in V(B)$, if $w=r(e)\in S$ for all $e\in E(B)$ such that $s(e)=v$ then   $v\in S$.
\end{enumerate}
\end{definition}

In the case of ample groups, not all normal subgroups correspond to ideals in the Bratteli diagram, but the following lemma will suffice (compare the analogous \cite[III.4.4]{davidson_book} for AF-algebras).

\begin{lemma}\label{lem:ud_normalsgps_corresp_bratteli_ideals}
	Let $G$ be an ample group with associated Bratteli diagram $B$. 
	For every ideal $S$ of $B$ there is a normal subgroup $N\unlhd G$, such that the quotient $G/N$ has the form
 $$G/N = \varinjlim_{n\in \N}\bigoplus_{v\in V_n\setminus S}\Sym(\pi^{-1}(v)).$$
\end{lemma}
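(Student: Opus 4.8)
The plan is to build $N$ level by level from the decomposition $G=\bigcup_n G_n$ with $G_n=\bigoplus_{v\in V_n}\Sym(\pi_n^{-1}(v))$ furnished by Proposition~\ref{prop:udfull_direct_union_symmetrics}. For each $n$ I would set
\[
N_n := \bigoplus_{v\in V_n\cap S}\Sym(\pi_n^{-1}(v)) \le G_n,
\]
the partial direct sum taken over those level-$n$ vertices that lie in the ideal $S$. Since $N_n$ is the sub-sum over a subset of the summands of the direct sum $G_n$, it is a direct factor and in particular $N_n\unlhd G_n$, with quotient $G_n/N_n=\bigoplus_{v\in V_n\setminus S}\Sym(\pi_n^{-1}(v))$.

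The first thing to verify is that the block-diagonal embeddings $G_n\hookrightarrow G_{n+1}$ of Proposition~\ref{prop:udfull_direct_union_symmetrics}(v) carry $N_n$ into $N_{n+1}$. Fixing $v\in V_n\cap S$, the image of the factor $\Sym(\pi_n^{-1}(v))$ under the embedding is supported only on those summands $\Sym(\pi_{n+1}^{-1}(w))$ with $w$ a successor of $v$ in $B$ (i.e.\ $s(e)=v$, $r(e)=w$ for some edge $e$), because $\Sym(\pi_n^{-1}(v))$ fixes pointwise every atom of $\B_{n+1}$ lying outside the sub-orbits that refine the atoms below $v$. By the first defining condition of an ideal, every such successor $w$ again lies in $S$, so the image of $\Sym(\pi_n^{-1}(v))$ lies in $N_{n+1}$. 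Hence $N_n\le N_{n+1}$, the union $N:=\bigcup_n N_n$ is a well-defined subgroup of $G$, and it is normal because $G=\bigcup_n G_n$ and each $N_n\unlhd G_n$.

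To identify the quotient I would establish the matching identity $N_{n+1}\cap G_n=N_n$, from which $N\cap G_n=N_n$ follows by induction on the number of intervening levels, yielding compatible injections $G_n/N_n\hookrightarrow G/N$ whose union exhausts $G/N$. Take $(\sigma_v)_{v\in V_n}\in G_n$ whose image lies in $N_{n+1}$. For any $v\notin S$, the second defining condition of an ideal supplies a successor $w$ of $v$ with $w\notin S$; since the action of $\Sym(\pi_n^{-1}(v))$ on $\pi_{n+1}^{-1}(w)$ is permutationally equivalent to a (diagonal) copy of its natural action on $\pi_n^{-1}(v)$, and hence faithful, the vanishing of the $w$-component forces $\sigma_v=1$. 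Thus every element of $G_n$ mapping into $N_{n+1}$ already lies in $N_n$, giving $N_{n+1}\cap G_n=N_n$. Assembling these injections produces
\[
G/N=\varinjlim_{n\in\N} G_n/N_n=\varinjlim_{n\in\N}\bigoplus_{v\in V_n\setminus S}\Sym(\pi_n^{-1}(v)),
\]
as claimed.

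The main obstacle is the bookkeeping with the block-diagonal structure: both the containment $N_n\le N_{n+1}$ and the intersection identity $N_{n+1}\cap G_n=N_n$ hinge on reading off from Proposition~\ref{prop:udfull_direct_union_symmetrics}(v) exactly which level-$(n+1)$ summands support the image of a given factor $\Sym(\pi_n^{-1}(v))$, together with the faithfulness of that image on each nonempty successor summand. Once the two ideal axioms are matched to these two facts --- the first axiom for ``successors of $S$-vertices stay in $S$'', the second for ``every non-$S$ vertex has a non-$S$ successor'' --- the remaining direct-limit computation is routine.
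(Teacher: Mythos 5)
Your proof is correct, but it takes a genuinely different route from the paper's. The paper argues dynamically: it passes to the extended diagram $\tilde{B}$, notes that the first ideal axiom makes $\tilde{B}\setminus\tilde{S}$ a subtree containing the root, lets $Y\subseteq X$ be the closed $G$-invariant set of infinite paths avoiding $\tilde{S}$, and defines $N$ as the kernel of the restricted action $G\curvearrowright Y$; the quotient description is then read directly off the diagram with the $S$-vertices deleted. You instead build $N$ purely algebraically as $\bigcup_n N_n$ with $N_n=\bigoplus_{v\in V_n\cap S}\Sym(\pi_n^{-1}(v))$, using ideal axiom (i) to get $N_n\le N_{n+1}$ under the block-diagonal embeddings and axiom (ii), together with faithfulness of $\Sym(\pi_n^{-1}(v))$ on each successor block, to get $N_{n+1}\cap G_n=N_n$ and hence injectivity of $G_n/N_n\hookrightarrow G/N$. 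Your approach yields strictly more explicit information (the level-wise identity $N\cap G_n=N_n$, which the paper never states), at the cost of the bookkeeping you acknowledge; the paper's approach is shorter and makes transparent the correspondence between diagram ideals and closed invariant subsets, mirroring the AF-algebra picture it cites. One small step you leave implicit: when the $w$-component of the image of $(\sigma_v)_v$ vanishes, you conclude $\sigma_v=1$ for the particular $v$, but several level-$n$ vertices may feed into the same successor $w$; this is fine because distinct atoms of $\B_n$ contain disjoint families of atoms of $\B_{n+1}$ and each block $\Sym(\pi_n^{-1}(v'))$, $v'\neq v$, fixes pointwise the atoms refining $v$ (as noted in the proof of Proposition~\ref{prop:udfull_direct_union_symmetrics}), so the restriction of the $w$-component to the sub-orbit refining $v$ is exactly the image of $\sigma_v$ --- worth a sentence, but not a gap.
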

\begin{proof}
Let $S$ be an ideal of $B$ and write $\tilde{S}$ for the preimage of $S$ in the extended Bratelli diagram $\tilde{B}$.
The first condition in the definition of an ideal ensures that $\tilde{B} \setminus \tilde{S}$ is a subtree of $\tilde{B}$ containing the root.
Let $Y$ be the subspace of $X$ corresponding to the infinite paths in $\tilde{B} \setminus \tilde{S}$, or equivalently, the infinite paths in $\tilde{B}$ that do not pass through $\tilde{S}$.
Because on each level $n$, the set $\tilde{V}_n \cap \tilde{S}$ is a union of $G_n$-orbits, we see that $Y$ is a closed $G$-invariant subspace.
We can therefore restrict the usual action of $G$ on $X$ to obtain an action on $Y$ with some kernel $N$.

From the Bratelli diagram, we see that $G/N$ acts as the direct limit 
\[
\varinjlim_{n\in \N}\bigoplus_{G\tilde{v}\subseteq \tilde{V}_n \setminus \tilde{S}}\Sym(G\tilde{v}) = \varinjlim_{n\in \N}\bigoplus_{v\in V_n\setminus S}\Sym(\pi^{-1}(v)),
\]
where in the first sum we have one summand for each $G$-orbit on $\tilde{V}_n \setminus \tilde{S}$.
\end{proof}

\begin{proposition}\label{prop:ud_ample_resfin_diagramproof}
	If $G$ is a uniformly discrete ample group then $G$ is residually finite.
\end{proposition}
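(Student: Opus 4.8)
The plan is to reprove residual finiteness entirely on the level of the Bratteli diagram, so that (as the surrounding discussion indicates) the same argument applies verbatim to dimension ranges and AF-algebras. Concretely, for each nontrivial $g \in G$ I will produce an ideal $S$ of $B$ whose associated normal subgroup $N$, furnished by Lemma~\ref{lem:ud_normalsgps_corresp_bratteli_ideals}, has finite index and does not contain $g$. This is the diagrammatic incarnation of the fixator argument in Lemma~\ref{prop:finite_orbit_implies_resinf}.

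First I would invoke Theorem~\ref{thm:bratteli_condition} to telescope $B$ so that $B \setminus (V_0 \cup E_1)$ is a multitree, and fix the decomposition $G = \bigcup_n G_n$, $\B = \bigcup_n \B_n$ of Proposition~\ref{prop:udfull_direct_union_symmetrics}; uniform discreteness with respect to a partition $P$ of size $k$ then gives $d(v) \le k$ for every vertex $v$. Given $1 \ne g$, pick $x \in X$ with $gx \ne x$ and let $(v_m)_{m \ge 0}$ be the image in $B$ of the infinite path representing $x$, so that each $v_{m+1}$ is a child of $v_m$. Take $T$ to be the ancestor-closure of $\{v_m : m \ge 0\}$ and set $S = V \setminus T$. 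A routine check shows that $T$ is ancestor-closed and that every vertex of $T$ has a child in $T$ (namely the next vertex along a path to some $v_m$), which is exactly the assertion that $S$ is an ideal.

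The crux is to bound $|T \cap V_m|$ uniformly in $m$, and this is where the multitree hypothesis enters. Iterating the defining relation $d(v) = \sum_{r(e)=v} d(s(e))$ down to level $m$ expresses $d(v)$ as a sum of $d(u)$ over directed paths $u \to v$ with $u \in V_m$; since $B$ is a multitree there is at most one such path for each $u$ and each $d(u) \ge 1$, so $v$ has at most $d(v) \le k$ ancestors at level $m$. The level-$m$ ancestor sets of the vertices $v_n$ with $n \ge m$ increase with $n$ and each has size at most $k$, so their union $T \cap V_m$ has size at most $k$. By Lemma~\ref{lem:ud_normalsgps_corresp_bratteli_ideals}, $G/N = \varinjlim_m \bigoplus_{v \in V_m \setminus S}\Sym(\pi^{-1}(v))$ is therefore a direct limit of groups of order at most $(k!)^k$; being an increasing union of finite subgroups of bounded order, it stabilises, so $G/N$ is finite.

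It remains to see that $g \notin N$. The path of $x$ projects into $T$, so $x$ lies in the $G$-invariant subspace $Y$ of Lemma~\ref{lem:ud_normalsgps_corresp_bratteli_ideals}; hence $gx \in Y$ as well, and since $gx \ne x$ the element $g$ acts nontrivially on $Y$ and so is not in the kernel $N$. As $g$ was arbitrary and each $G/N$ is finite, $G$ is residually finite. I expect the boundedness estimate for $|T \cap V_m|$ to be the only real obstacle; verifying that $S$ is an ideal and that $g$ survives in $G/N$ should be a direct check.
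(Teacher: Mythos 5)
Your proof is correct and takes essentially the same route as the paper's: telescope via Theorem~\ref{thm:bratteli_condition} so that the diagram past the root is a multitree, take the ideal complementary to the ancestor-closure of an infinite path, apply Lemma~\ref{lem:ud_normalsgps_corresp_bratteli_ideals}, and use the multitree condition to make the quotient finite. The only (harmless) variations are that you follow the path of a point $x$ moved by $g$ and check $g \notin N$ dynamically via the invariant subspace $Y$, where the paper follows a path issuing from a vertex onto which $g$ projects nontrivially and argues algebraically; your explicit bound $|T \cap V_m| \le k$ actually spells out a point the paper's claim that $G/N \cong \Sym(k)$ leaves somewhat implicit.
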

\begin{proof}
Let $B$ be a Bratteli diagram associated to the action of $G$ on the Cantor set $X$.
Assume without loss of generality, using Theorem \ref{thm:bratteli_condition}, that $B\setminus(V_0\cup E_1)$ is a multitree and let $G=\varinjlim \left(G_n=\bigoplus_{v\in V_n}\Sym(\pi^{-1}(v))\right)$ be the associated decomposition of $G$.

Let $g\in G$ be a non-trivial element. 
Then $g\in G_n$ for some $n$ and has non-trivial projection onto some summand $\Sym(\pi^{-1}(v))$ with $v\in V_n$. 
Let $\gamma$ be some infinite directed path in $B$ starting from $v$ and denote by $C$ the subgraph of $B$ spanned by $\gamma$ and all directed paths from the root to any vertex of $\gamma$.  
Note that, by the multitree condition, there is at most one such path going through each $u\in V_1$. 
Since $G$ is piecewise full, it is infinite, so there are infinitely many infinite directed paths in $B$ (not necessarily starting at the root) that are not in $C$. 

The definition of $C$ guarantees that $V(B\setminus C)$ is an ideal: (i) if $e \in E(B)$ is such that $s(u)\in V(B\setminus C)$ then no path starting from $r(u)$ can end on $\gamma$, so $r(u) \in V(B \setminus C)$; (ii) given $u \in V(B)$, if $w=r(e)\in V(B\setminus C)$ for all $e\in E(B)$ such that $s(e)=u$ then no path starting from $u$ can end on $\gamma$, so $u\in V(B\setminus C)$.

By Lemma~\ref{lem:ud_normalsgps_corresp_bratteli_ideals}, there is a normal subgroup $N$ of $G$ such that
$$G/N\cong \varinjlim \bigoplus_{u\in C\cap V_n } \Sym(\pi^{-1}(u)).$$
Note that, as $B\setminus (V_0\cup E_1)$ is a multitree, $\{d(u)=|\pi^{-1}(u)| \colon u\in C\}$ is finite, with maximum $k$ say. 
Thus $G/N\cong \Sym(k)$ is finite. 
The fact that the projection of $g$ onto the direct summand $\Sym(\pi^{-1}(v))\hookrightarrow \Sym(k)$ is non-trivial implies that $g$ has non-trivial image in $G/N$.
\end{proof}

The above, and the condition in Theorem \ref{thm:bratteli_condition} is to be contrasted with the condition for simplicity of a Bratteli diagram: for every infinite path $\gamma$ in the Bratteli diagram and every vertex $u$, there is a path starting at $u$ and ending at some vertex in $\gamma$. 
It is not hard to show (see \cite[Proposition 5.2]{lavnek}) that a Bratteli diagram is simple if and only if its associated ample group acts minimally (all orbits are dense) on $X$. 
Again, this is in stark contrast to the action of a uniformly discrete group. 

The ample group $G$ associated to  a simple Bratteli diagram as we have defined it is not necessarily simple. 
Indeed, Lemma \ref{lem:ud_normalsgps_corresp_bratteli_ideals} does not account for all normal subgroups, because the normal subgroup obtained by taking alternating groups instead of symmetric groups in the direct limit decomposition of $G$ does not appear as an ideal of $B$. 
However, it can be shown \cite[Theorem 3.1]{lavnek} that, apart from degenerate cases, an ample group of this alternating form is simple if and only its associated Bratteli diagram is simple.

\subsection{Dimension ranges or $K_0$ groups}\label{ssec:dimension_range}

This subsection is a translation of the results in Subsection \ref{ssec:Bratteli} to dimension ranges.

The \defbold{dimension range} of an ample group is an algebraic invariant associated to it in \cite{krieger}, which classifies ample groups up to conjugation in $\Homeo(X)$ (\cite[Corollary 3.6]{krieger}).
It is directly inspired by Elliott's dimension group of an AF-algebra (which is in fact the scaled $K_0$ group of the algebra, an ordered abelian group, with some extra information -- the scale) that he shows classifies these $C^*$-algebras \cite{elliott}\cite[IV.4]{davidson_book}.

In \cite{krieger}, a \textit{unit system} is defined to be a pair $(\mc{A},G)$, where $\mc{A}$ is a subalgebra of $\mc{B}$ and $G$ is a countable locally finite subgroup of $\Homeo(X)$ that is piecewise full on $\mc{A}$ and acts faithfully on $\mc{A}$, and such that for all $g \in G$, the fixed-point set of every element of $G$ is an element of $\mc{A}$.
The dimension range of an ample group $G=\bigcup_{n\in\N}G_n, \B=\bigcup_{n\in\N}\B_n$ is constructed by first constructing dimension ranges for the finite unit systems $(\B_n,G_n)$ as follows. 
Start with the set $\B_n/G_n$ of $G$-orbits on $\B_n$ and form the $\Z$-module generated by $\B_n/G_n$, with a relation $w - v - u$ for $u,v,w \in \B_n/G_n$ whenever there exist disjoint $\tilde{v}\in v$ and $\tilde{u}\in u$ such that $\tilde{v}\sqcup\tilde{u}\in w$.
Performing the Grothendieck group construction on this monoid then gives an abelian group $\B_n | G_n$.
Moreover, the inclusion order on $\B_n$ is inherited by $\B_n | G_n$ and preserved by addition, turning $\B_n | G_n$ into an ordered abelian group. 
Indeed, as an ordered abelian group, $\B_n | G_n$ is isomorphic to $(\Z^{|V_n|}, \Z_+^{|V_n|})$ where $\Z_+^{|V_n|}$ is the submonoid of non-negative elements of the group. 
Since $G_n, \B_n$ are finite, every element of $\B_n | G_n$ is a sum of the $G_n$-orbits of $\B_n$-atoms it contains. 
This corresponds to $K_0$ of the system $(\B_n,G_n)$. 
The extra information that we need is the tuple $(d(v))_{v\in \atom(\B_n)}$ consisting of the number $d(v)$ of elements in the $G_n$-orbit of $v\in \atom(\B_n)$. 
This is known as the \defbold{scale} $\Gamma_n$ of $(\B_n,G_n)$ and the triple $(\B_n | G_n, (\B_n | G_n)_+, \Gamma_n)$ is the \defbold{dimension range} of $(\B_n,G_n)$. 
To put it simply, the dimension range of  $(\B_n,G_n)$ is isomorphic to $(\Z^{|V_n|}, \Z_+^{|V_n|}, \{(i_v)_{v\in V_n}\colon 0\leq i_v\leq d(v)\})$. 

The inclusion $(\B_n,G_n)\leq (\B_{n+1},G_{n+1})$ induces a morphism of dimension ranges which is completely described by a $| \atom(\B_n)/G_n|$-by-$| \atom(\B_{n+1})/G_{n+1}|$ matrix whose $(i,j)$ entry is the number of atoms of the orbit $v_j\in \atom(\B_{n+1})/G_{n+1}$ that are contained in $v_i\in  \atom(\B_n)/G_n$. 
In other words, passing to the Bratteli diagram corresponding to $G=\bigcup_{n\in\N} G_n, \B=\bigcup_{n\in\N}\B_n$, the matrix just described is the adjacency matrix between $V_n$ and $V_{n+1}$. 
	
We can therefore define the dimension range of $(\B,G)$ as the direct limit of dimension ranges $(\B_n | G_n, (\B_n | G_n)_+, \Gamma_n)$ with the morphisms described above. 

Of course, one could start with a Bratteli diagram and build its corresponding ample group, the orbits of which would yield a dimension range. 

 With the Bratteli diagram dictionary at our disposal, Theorem \ref{thm:bratteli_condition}  easily yields a characterisation of the dimension ranges of ample groups that are uniformly discrete. 
 
 \begin{corollary}\label{cor:dimension_range}
 Let $G=\bigcup_{n\in\N}G_n, \B=\bigcup_{n\in\N}\B_n$ be an ample group with dimension range $\varinjlim (\B_n | G_n, (\B_n | G_n)_+, \Gamma_n)$. 
 Then $G$ is uniformly discrete if and only if the matrices describing all but finitely many of the morphisms in the direct limit have only 1s and 0s as entries.
 \end{corollary}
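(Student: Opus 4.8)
The plan is to translate the statement directly through the Bratteli-diagram dictionary established in Subsection~\ref{ssec:Bratteli} and then invoke Theorem~\ref{thm:bratteli_condition}. The key observation is that the matrices describing the morphisms in the direct limit of dimension ranges are precisely the adjacency matrices between consecutive levels $V_n$ and $V_{n+1}$ of the associated Bratteli diagram $B$, as was spelled out at the end of Subsection~\ref{ssec:dimension_range}. Thus the entire corollary is a matter of recognising that the entry-wise condition ``only $1$s and $0$s'' on these adjacency matrices is the same as the multitree condition appearing in Theorem~\ref{thm:bratteli_condition}, modulo the freedom to telescope.

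First I would make precise the correspondence between matrix entries and paths. The $(i,j)$ entry of the morphism matrix from level $n$ to level $n+1$ counts the number of atoms in the $G_{n+1}$-orbit $v_j$ that are contained in the $G_n$-orbit $v_i$; equivalently, it is the number of edges in $E_{n+1}$ from $v_i \in V_n$ to $v_j \in V_{n+1}$. So the matrix has all entries in $\{0,1\}$ if and only if there is at most one edge between any pair of vertices at consecutive levels. Telescoping $B$ along a sequence $(m_n)_n$ replaces the single-step adjacency matrices by their products (the matrices counting directed paths between $V_{m_{n-1}}$ and $V_{m_n}$), and these are exactly the matrices describing the morphisms of the telescoped direct limit of dimension ranges. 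The multitree condition ``no multiple directed paths between any pair of vertices'' on a telescoped diagram $B'$ is then literally the condition that every adjacency matrix of $B'$ has only $0$s and $1$s as entries.

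With this dictionary in place the two directions are short. For the forward direction, if $G$ is uniformly discrete, Theorem~\ref{thm:bratteli_condition} gives a telescoping $B'$ such that $B' \setminus (V'_0 \cup E'_1)$ is a multitree; the adjacency matrices of $B'$ between levels $V'_m$ and $V'_{m+1}$ for $m \ge 1$ therefore have only $\{0,1\}$ entries, and since telescoping corresponds exactly to composing (products of) the morphisms, all but finitely many morphisms in the telescoped direct limit are described by $\{0,1\}$-matrices. Because the telescoped direct limit has the same dimension range, this gives the required cofinite condition on the original morphisms after discarding the finitely many steps preceding the telescoping point. Conversely, if all but finitely many morphism matrices in the direct limit have entries in $\{0,1\}$, then telescoping $B$ past the finitely many exceptional levels produces a diagram $B'$ all of whose single-step adjacency matrices (from level $1$ onward) have $\{0,1\}$ entries, which is precisely the statement that $B'\setminus(V'_0\cup E'_1)$ is a multitree, so Theorem~\ref{thm:bratteli_condition} yields uniform discreteness.

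The main subtlety, and the step requiring the most care, is bookkeeping the interplay between ``all but finitely many morphisms'' on the dimension-range side and the single exceptional level $V'_0 \cup E'_1$ excluded in the multitree condition of Theorem~\ref{thm:bratteli_condition}, together with keeping track of how telescoping turns the ``at most one edge'' condition on $B'$ into the ``product of matrices has $\{0,1\}$ entries'' condition and back. In particular one must check that when several consecutive morphism matrices are individually $\{0,1\}$-valued their products need not be, which is why the statement is phrased for the matrices of a suitable (telescoped) presentation rather than for an arbitrary one; the freedom to telescope is exactly what reconciles the two formulations, and I would state explicitly that the dimension range is an invariant of the telescoping-equivalence class so that passing between $B$ and $B'$ does not change the object under consideration.
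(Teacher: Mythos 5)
Your translation of the dimension-range morphisms into adjacency matrices of the Bratteli diagram is exactly the dictionary the paper sets up at the end of Subsection~\ref{ssec:dimension_range}, and your forward direction is essentially sound. The converse, however, contains a genuine gap: you assert that ``every single-step adjacency matrix of $B'$ has entries in $\{0,1\}$'' is \emph{precisely} the statement that $B'\setminus(V'_0\cup E'_1)$ is a multitree. It is not. The multitree condition forbids multiple directed \emph{paths} of any length, and the number of directed paths from $v\in V_m$ to $u\in V_l$ is the $(v,u)$ entry of the \emph{product} of the intervening adjacency matrices; as you yourself observe in your final paragraph, a product of $\{0,1\}$-matrices need not be $\{0,1\}$-valued. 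So ``no multiple edges'' is strictly weaker than ``multitree'', and Theorem~\ref{thm:bratteli_condition} cannot be invoked where you invoke it. The failure is not hypothetical: take a ``diamond'' with $V_1=\{v\}$, $V_2=\{w_1,w_2\}$, one edge from $v$ to each $w_i$ and one edge from each $w_i$ to a common $u\in V_3$ (say $d(v)=d(w_i)=2$, $d(u)=4$, padding with further vertices to exhaust the atoms), and repeat the diamond between levels $n$ and $n+2$ for every $n$. Every single-step matrix has entries in $\{0,1\}$, yet the associated ample group $G$ is not uniformly discrete: for any clopen partition $\mc{P}$, choose $N$ with $\atom(\B_N)$ refining $\mc{P}$; a diamond above level $N$ gives an atom $A$ of some $\B_m$, $m\geq N$, containing two distinct atoms of $\B_{m+2}$ in one $G_{m+2}$-orbit, and the transposition exchanging them (which lies in $G$, since $G$ is piecewise full) places two points of one orbit in one part of $\mc{P}$. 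Note also that your proposed repair, telescoping past the finitely many exceptional levels, cannot work here: the double paths occur between vertices at arbitrarily high levels, and multiple paths persist under any telescoping (extend the two paths backwards to an ancestor at a telescoped level $\geq 1$ and forwards to a common descendant), so \emph{no} telescoping of this diagram is a multitree.

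What the paper's one-line derivation from Theorem~\ref{thm:bratteli_condition} tacitly requires is the $\{0,1\}$ condition on the \emph{composite} morphisms: $G$ is uniformly discrete if and only if for all sufficiently large $n$ and all $l>n$ the matrix of $\B_n|G_n\to\B_l|G_l$ (the product of the intervening matrices, i.e.\ the path-count matrix) has entries in $\{0,1\}$; equivalently, the matrices of every telescoping are eventually $\{0,1\}$-valued. With this reading both directions go through. For ``only if'', argue directly rather than via the telescoping of Theorem~\ref{thm:bratteli_condition}: if $G$ is uniformly discrete with respect to $\mc{P}$ and $\atom(\B_N)$ refines $\mc{P}$, then no atom at level $m\geq N$ can contain two atoms of a single $G_l$-orbit for \emph{any} $l>m$, because the transposition swapping them lies in the piecewise full group and would contradict uniform discreteness; this bounds all path counts by $1$ at once. (Your single-step conclusion then follows too, since a factor of a $\{0,1\}$-valued product of nonnegative integer matrices is itself $\{0,1\}$-valued, no adjacency matrix here having a zero row or column.) For ``if'', the composite condition from level $N$ onwards literally \emph{is} the multitree condition for the telescoping along $0,N,N+1,N+2,\dots$, and Theorem~\ref{thm:bratteli_condition} applies. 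In short: the freedom to telescope does not reconcile the single-edge and multitree formulations, as you suggest; rather, the $\{0,1\}$ hypothesis must be imposed robustly under telescoping (equivalently on path counts), and your proof needs this strengthening to be correct.
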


The analogue of Lemma \ref{lem:ud_normalsgps_corresp_bratteli_ideals} is a combination of III.4.2, III.4.4 and IV.5.1 of \cite{davidson_book} and is well-known.
An \defbold{order ideal} of a partially ordered group $(\Gamma, \Gamma_+)$ is a subgroup $\Delta\leq \Gamma$ such that, denoting $\Delta_+:=\Delta\cap \Gamma_+$, we have that $\Delta=\Delta_+-\Delta_+$ and if $0<\gamma<\delta$ for some $\gamma\in \Gamma$ and $\delta\in \Delta_+$, then $\gamma\in \Delta$.
\begin{lemma}\label{lem:Bratteli_ideal_corresp_order_ideal}
	The ideals of a Bratteli diagram $B$ associated to an ample group $G$ are in 1-1 correspondence with the order ideals of the dimension range of $G$. 
\end{lemma}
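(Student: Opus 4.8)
The plan is to write down explicit, mutually inverse maps between Bratteli-diagram ideals and order ideals, working throughout with the direct-limit presentation of the dimension range from Subsection~\ref{ssec:dimension_range}. Write $D = \varinjlim_n (\Z^{|V_n|}, \phi_n)$ for the underlying ordered group of the dimension range, where $\phi_n\colon \Z^{|V_n|}\to \Z^{|V_{n+1}|}$ is the multiplicity (adjacency) matrix between $V_n$ and $V_{n+1}$, each $\Z^{|V_n|}$ carries the simplicial order with standard basis $\{e_v\}_{v\in V_n}$ indexed by vertices, and $\psi_n\colon \Z^{|V_n|}\to D$ is the canonical map into the limit; write $\phi_{n,m}=\phi_{m-1}\circ\cdots\circ\phi_n$. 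To an ideal $S$ of $B$ I associate the subgroup $\Delta_S:=\varinjlim_n \Delta_{S,n}$, where $\Delta_{S,n}\le \Z^{|V_n|}$ is spanned by $\{e_v : v\in S\cap V_n\}$; conversely, to an order ideal $\Delta\le D$ I associate the vertex set $S_\Delta:=\{v\in V(B) : \psi_n(e_v)\in\Delta,\ v\in V_n\}$.

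First I would check that both assignments are well defined and land in the right class. That $(\Delta_{S,n})_n$ is a directed system (so that $\Delta_S$ is a genuine subgroup of $D$) is exactly condition~(i) of a Bratteli ideal: since $\phi_n(e_v)=\sum_w m_{vw}e_w$ with $m_{vw}$ the number of edges $v\to w$, condition~(i) guarantees that $m_{vw}>0$ and $v\in S$ force $w\in S$, so $\phi_n$ carries generators of $\Delta_{S,n}$ to nonnegative combinations of generators of $\Delta_{S,n+1}$. Because each $\Delta_{S,n}$ is spanned by positive basis vectors and the order is simplicial, $\Delta_S=(\Delta_S)_+-(\Delta_S)_+$ and the absorption property of an order ideal holds at each finite level, hence in the limit. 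In the other direction, $S_\Delta$ satisfies~(i) because for an edge $v\to w$ one has $0<\psi_{n+1}(e_w)\le \psi_n(e_v)$, so if $\psi_n(e_v)\in\Delta$ then absorption gives $\psi_{n+1}(e_w)\in\Delta$; and it satisfies~(ii) because if every range $w=r(e)$ of an edge out of $v$ lies in $S_\Delta$, then $\psi_n(e_v)=\sum_w m_{vw}\psi_{n+1}(e_w)$ is a sum of elements of $\Delta$ and so lies in $\Delta$.

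It then remains to verify the two composites are the identity. The inclusion $\Delta_{S_\Delta}\subseteq\Delta$ is immediate, since its generators lie in $\Delta$ by definition of $S_\Delta$ and $\Delta$ is a group. For $\Delta\subseteq\Delta_{S_\Delta}$ I would take $\delta\in\Delta_+$, realise it at a suitable level as $\delta=\psi_m(x)$ with $x=\sum_v a_v e_v$ and all $a_v\ge 0$, observe $0\le \psi_m(e_v)\le\delta$ whenever $a_v>0$, conclude $\psi_m(e_v)\in\Delta$ and hence $v\in S_\Delta$, and finally write $\delta=\sum_v a_v\psi_m(e_v)\in\Delta_{S_\Delta}$; since $\Delta=\Delta_+-\Delta_+$ this suffices. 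For $S_{\Delta_S}=S$: if $v\in S$, iterating~(i) shows every vertex reachable from $v$ lies in $S$, so $\phi_{n,m}(e_v)$ is supported on $S$ for all $m$ and thus $\psi_n(e_v)\in\Delta_S$, i.e.\ $v\in S_{\Delta_S}$. Conversely, if $v\in S_{\Delta_S}$ then $\phi_{n,m}(e_v)\in\Delta_{S,m}$ for some $m$, meaning every vertex of $V_m$ reachable from $v$ lies in $S$; a downward induction on the level then recovers $v\in S$, using at each reachable vertex $u$ that all edge-ranges out of $u$ are again reachable from $v$, hence inductively in $S$, so that condition~(ii) forces $u\in S$.

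The one genuinely delicate point, which I would isolate as a preliminary remark, is that inequalities and equalities in the direct limit $D$ of simplicially ordered groups are always witnessed at a finite stage: if $\psi_n(x)\ge 0$ in $D$ then $\phi_{n,m}(x)\ge 0$ coordinatewise for some $m$, and $\psi_n(x)=0$ forces $\phi_{n,m}(x)=0$ for some $m$. This is the standard limit-order fact for dimension groups, and it underlies every step above where I pass between ``a relation holds in $D$'' and ``it is seen simplicially at level $V_m$'' --- in particular the choice of a nonnegative representative $x$ for $\delta\in\Delta_+$, and the identification of $S_{\Delta_S}$ with $S$. Once this is in hand, the remaining work is the purely combinatorial downward induction via condition~(ii), which is routine.
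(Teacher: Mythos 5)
Your proof is correct, and it takes a genuinely different route from the paper, which in fact offers no proof at all: the lemma is justified there by the remark that it is ``a combination of III.4.2, III.4.4 and IV.5.1 of \cite{davidson_book}'', i.e.\ by composing the known bijections between ideals of a Bratteli diagram, closed ideals of the associated AF-algebra, and order ideals of its $K_0$ group. You instead build the two maps $S\mapsto\Delta_S$ and $\Delta\mapsto S_\Delta$ explicitly and check by hand that they are mutually inverse, bypassing the $C^*$-algebra machinery entirely. What each approach buys: the paper's citation is short and situates the lemma in its standard operator-algebraic context, while your argument is self-contained, makes the correspondence explicit at the level of generators $\psi_n(e_v)$ (which is exactly the form in which it gets used alongside Lemma~\ref{lem:ud_normalsgps_corresp_bratteli_ideals} in Proposition~\ref{prop:ud_ample_resfin_diagramproof}), and correctly isolates the one nontrivial input, namely that order relations and equalities in $D$ are witnessed at a finite stage; this holds because the positive cone of the limit is by construction $\bigcup_n \psi_n\bigl(\Z_+^{|V_n|}\bigr)$ and equality in a direct limit of groups is always realised at some finite level. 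Two points you leave implicit do need the structure of the diagram but are fine: the paper's definition of order ideal uses the strict inequality $0<\gamma<\delta$, whereas you invoke absorption for $0<\gamma\le\delta$ --- harmless, since $\gamma=\delta\in\Delta$ is trivial; and your claims that $\psi_{n+1}(e_w)>0$ and that the support of $\phi_{n,m}(e_v)$ is precisely the set of vertices of $V_m$ reachable from $v$ both rest on the fact that in the Bratteli diagram of an ample group every vertex has at least one outgoing edge (each atom of $\B_n$ is a nonempty union of atoms of $\B_{n+1}$) and all multiplicities are nonnegative, so no cancellation occurs; the same fact ensures condition (ii) is never vacuously triggered in your downward induction recovering $S$ from $\Delta_S$.
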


An analogous argument to Proposition \ref{prop:ud_ample_resfin_diagramproof} shows that dimension ranges of uniformly discrete ample groups are \defbold{residually finitely-generated} (every non-trivial element has non-trivial image in a finitely generated quotient).

\section{Not every uniformly discrete ample group arises from a finite group}\label{sec:finite_origin}

The most obvious examples of uniformly discrete piecewise full groups are obtained as piecewise full groups of finite homeomorphism groups, necessarily acting in a uniformly discrete way.  Given a finite group $F$, there are several equivalent ways to characterise when $\Full(F)$ is uniformly discrete.

\begin{proposition}\label{prop:F_is_ud}
Let $F\leq\Homeo(X)$ be finite and let $G=\Full(F)$. The following are equivalent:
\begin{enumerate}
	\item $F$ is uniformly discrete;
	\item $G$ is uniformly discrete; 
	\item $G$ is ample;
	\item for every $f\in F$, the set $\Fix(f)$ of fixed points of $f$ is a clopen subset of $X$;
	\item $F$ acts on some clopen partition $\mc{P}$ of $X$ such that the setwise stabiliser of each part coincides with its pointwise stabiliser. 
\end{enumerate}
\end{proposition}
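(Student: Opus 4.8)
The plan is to prove the five conditions equivalent by establishing (1) $\Leftrightarrow$ (2) outright and then closing the cycle (2) $\Rightarrow$ (3) $\Rightarrow$ (4) $\Rightarrow$ (5) $\Rightarrow$ (1). Almost every link follows immediately from results already in hand; the only step carrying genuine (if light) content is the final implication (5) $\Rightarrow$ (1), and that is where I would concentrate.

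First I would dispatch (1) $\Leftrightarrow$ (2). Since $G = \Full(F)$, Lemma~\ref{lem:basic_observations}(i) says that $F$ is uniformly discrete with respect to a partition $\mc{P}$ if and only if $\Full(F) = G$ is, so the two conditions are equivalent verbatim. For (2) $\Rightarrow$ (3) I would invoke Proposition~\ref{prop:ud_implies_ample}: if $G$ is uniformly discrete then $\Full(G)$ is ample, and as $G$ is already piecewise full we have $\Full(G) = G$, so $G$ itself is ample. The implication (3) $\Rightarrow$ (4) is then just unwinding Definition~\ref{def:ample}: an ample group has clopen fixed-point set for each of its elements, and since $F \le G$ we conclude in particular that $\Fix(f)$ is clopen for every $f \in F$. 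Finally (4) $\Rightarrow$ (5) is exactly the content of Lemma~\ref{lem:finite_clopen} applied to the finite group $F$.

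The one step carrying genuine content is (5) $\Rightarrow$ (1). Suppose $F$ permutes the parts of a clopen partition $\mc{P}$ with the setwise stabiliser of each part equal to its pointwise stabiliser; I would show that $F$ is uniformly discrete with respect to $\mc{P}$ itself. Fix $U \in \mc{P}$ and points $x$ and $y = fx$ lying in $U$ and in the same $F$-orbit, with $f \in F$. Since $y \in f(U) \cap U$, the part $f(U)$ of the $F$-invariant partition $\mc{P}$ meets $U$; as distinct parts of a partition are disjoint, this forces $f(U) = U$, i.e.\ $f$ stabilises $U$ setwise. By hypothesis $f$ then fixes $U$ pointwise, so $y = fx = x$. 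Hence $|Fx \cap U| \le 1$ for every $x \in X$ and $U \in \mc{P}$, which is precisely uniform discreteness of $F$.

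The main obstacle is essentially locating the right partition for (5) $\Rightarrow$ (1) and observing that $F$-invariance of $\mc{P}$ upgrades the statement ``$f$ sends a point of $U$ into $U$'' to ``$f$ stabilises $U$ setwise''; once this is seen, condition (5) finishes the argument. All the remaining implications are bookkeeping on top of Lemmas~\ref{lem:basic_observations} and~\ref{lem:finite_clopen} and Proposition~\ref{prop:ud_implies_ample}.
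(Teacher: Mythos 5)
Your proposal is correct and follows exactly the paper's route: (i)~$\Leftrightarrow$~(ii) via Lemma~\ref{lem:basic_observations}(i), (ii)~$\Rightarrow$~(iii) via Proposition~\ref{prop:ud_implies_ample}, (iii)~$\Rightarrow$~(iv) by unwinding Definition~\ref{def:ample}, (iv)~$\Rightarrow$~(v) via Lemma~\ref{lem:finite_clopen}, and (v)~$\Rightarrow$~(i) by the invariance argument. The only difference is that you spell out the (v)~$\Rightarrow$~(i) step (that invariance of $\mc{P}$ upgrades ``$f$ maps a point of $U$ into $U$'' to ``$f$ stabilises $U$ setwise'') which the paper dismisses as immediate --- a harmless, indeed welcome, elaboration.
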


\begin{proof}
	(i) and (ii) are equivalent by Lemma~\ref{lem:basic_observations}(i) and (ii) implies (iii) by Proposition~\ref{prop:ud_implies_ample}. Clearly (iii) implies (iv), and (iv) implies (v) by Lemma~\ref{lem:finite_clopen}.
	It is immediate that an action of $F$ on $\mc{P}$ as in (v) makes the action of $F$ on $X$ uniformly discrete with respect to $\mc{P}$, so (v) implies (i). 
	% The cycle of implications is now complete.
\end{proof}

Accordingly, we say that a group $G < \Homeo(X)$ is \defbold{ample of finite origin} if $G = \Full(F)$ for a finite group and $G$ is ample (equivalently, $G$ is uniformly discrete).

Not all uniformly discrete ample groups are of finite origin.  Here are criteria to distinguish ample groups of finite origin from other uniformly discrete piecewise full groups.

\begin{proposition}\label{prop:full_finite_iff_top_bratteli}
Suppose that $G\leq\Homeo(X)$ is uniformly discrete and piecewise full.
The following are equivalent:
\begin{enumerate}
	\item $G$ is ample of finite origin. 
	\item Given a Bratteli diagram $B=(V,E)$ associated to $G$, there exists $N\in\N$ such that $d(v)=d(u)$ for every $v\in V_N$ and every vertex $u\in V$ on a directed path starting at $v$.
	\item For every clopen subset $U$ of $X$, the union $\bigcup_{g\in G}g(U)$ of translates of $U$ by $G$ is clopen.
\end{enumerate}
\end{proposition}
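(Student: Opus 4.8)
The plan is to establish the three equivalences in the cycle (1) $\Rightarrow$ (2) $\Rightarrow$ (3) $\Rightarrow$ (1), exploiting the dictionary between the Bratteli diagram, the combinatorics of atoms, and the dynamics on $X$. Throughout I work with a fixed decomposition $G=\bigcup_n G_n$, $\B=\bigcup_n\B_n$ as furnished by Proposition~\ref{prop:udfull_direct_union_symmetrics}, so that $d(v)=|\pi_n^{-1}(v)|$ records the size of a $G_n$-orbit of atoms, and I use Theorem~\ref{thm:bratteli_condition} to assume (after telescoping) that $B\setminus(V_0\cup E_1)$ is a multitree; in particular for $v\in V_N$ with $N\ge 1$ and $u$ on a directed path from $v$, the value $d(u)$ counts exactly the $G$-orbit of the clopen set attached to $u$, and this orbit refines the one attached to $v$.

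For (1) $\Rightarrow$ (2), suppose $G=\Full(F)$ with $F$ finite. The key point is that $F$ already determines all orbit sizes from some finite level onward. Concretely, take $N$ large enough that $\B_N$ contains $\Fix(f)$ for every $f\in F$ (possible since these are finitely many clopen sets). Then for any atom $v$ of $\B_N$ the $G$-orbit of the corresponding clopen set $A_v$ is determined by $F$, and any atom $u$ of a later $\B_m$ contained in $A_v$ has the property that its $G$-stabiliser equals the $F$-setwise-stabiliser of $A_v$ restricted appropriately; I would show that the orbit sizes $d(v)$ and $d(u)$ both equal the index of this common stabiliser, giving $d(u)=d(v)$.

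For (2) $\Rightarrow$ (3), I translate the constancy of $d$ below level $N$ into a finiteness statement for $G$-orbits of clopen sets. Given a clopen $U$, refine so that $U$ is a union of atoms of some $\B_m$ with $m\ge N$; each such atom $u$ lies below a unique $v\in V_N$ (multitree condition), and condition (2) forces the $G$-orbit of the clopen set attached to $u$ to consist of \emph{finitely many} atoms all lying inside the clopen set $B_v$ attached to $v$. Consequently $\bigcup_{g\in G}g(U)$ is a finite union of clopen $G$-orbits of atoms, hence clopen. For (3) $\Rightarrow$ (1), I run the contrapositive via the dynamics: if $G$ is \emph{not} of finite origin then, by Proposition~\ref{prop:F_is_ud} and the failure of (2), there is a vertex $v\in V_N$ below which the orbit sizes $d(u)$ grow without bound; I would then exhibit a clopen atom $U=A_u$ whose $G$-orbit accumulates at a boundary point not covered by any finite subcollection, so $\bigcup_{g\in G}g(U)$ fails to be clopen. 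Finally I close the loop back to (1) by checking that when all orbit sizes are eventually constant, the group $\bigcup_g gU$ behaviour lets one reconstruct a single finite group $F$ (taking $F=G_N$ for the threshold level) with $\Full(F)=G$.

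The main obstacle I expect is the step (3) $\Rightarrow$ (1): condition (3) is a purely dynamical ``clopen orbit closure'' property, and extracting from it a single finite generating group $F$ with $G=\Full(F)$ requires care. The delicate point is to verify that eventual constancy of $d$ along every descending path, together with the multitree structure, really does bound the complexity uniformly rather than merely orbit-by-orbit; I would handle this by a compactness argument on $X$, covering $X$ by the finitely many clopen sets attached to $V_N$ and arguing that finiteness of each orbit union, applied to the atoms themselves, forces a uniform bound, after which $F=G_N$ works.
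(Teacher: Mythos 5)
Your cycle (i)$\Rightarrow$(ii)$\Rightarrow$(iii)$\Rightarrow$(i) is the same as the paper's, and your sketches of the first two implications are essentially the paper's arguments, with one fixable imprecision: choosing $N$ so that $\Fix(f)\in\B_N$ for all $f\in F$ does not by itself make $\B_N$ $F$-invariant, nor does it force setwise stabilisers of atoms to be pointwise stabilisers (an $f$ stabilising an atom could swap its two halves, with $\Fix(f)$ still a union of atoms). What you actually need is $N$ large enough that $F\le G_N$ (automatic since $F$ is finite and $G=\bigcup_n G_n$), after which $F$ permutes $\atom(\B_m)$ for $m\ge N$, the vertex fibres $\pi_m^{-1}(v)$ are exactly the $F$-orbits, and --- because after the telescoping of Theorem~\ref{thm:bratteli_condition} the level-$1$ atoms refine the partition of uniform discreteness --- setwise equals pointwise stabiliser at every level $\ge 1$. (The paper gets this via the invariant partition of Proposition~\ref{prop:F_is_ud}; also note $d(u)$ is the size of the $G_m$-orbit of the atom, not of its full $G$-orbit, which can be infinite when $G$ is not of finite origin.) With those corrections your common-stabiliser argument for (i)$\Rightarrow$(ii), and your stabilising-orbit-union argument for (ii)$\Rightarrow$(iii), go through as in the paper.

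The genuine gap is in (iii)$\Rightarrow$(i), and it is twofold. First, your contrapositive is circular: you deduce ``the failure of (2)'' from the assumption that $G$ is not of finite origin, i.e.\ you use $\neg(1)\Rightarrow\neg(2)$, which is the contrapositive of (2)$\Rightarrow$(1) --- but in your cycle that implication is only available \emph{after} (3)$\Rightarrow$(1) has been proved. Second, your fallback (``$F=G_N$ works'') stops exactly where the real work begins. Compactness of the clopen sets $\bigcup_{g\in G}g(U_i)$, for $U_i$ the parts of the partition of uniform discreteness, does yield a uniform $N$ with $\bigcup_{g\in G_N}g(U_i)=\bigcup_{g\in G}g(U_i)$ for all $i$ (this matches the paper), but that is only a set-level containment $g(U_i)\subseteq\bigcup_{h\in G_N}h(U_i)$; it does not by itself show that every $g\in G$ agrees \emph{piecewise} with elements of $G_N$, which is what $G=\Full(G_N)$ requires. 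The paper's key mechanism, absent from your plan, is this: for each part $W_j$ of a fine enough $G_n$-invariant refinement one finds $h_j\in G_N$ and a part $V_l$ of the $G_N$-invariant refinement with $g(W_j)\subseteq h_j(V_l)$ and with $W_j$ and $V_l$ contained in the \emph{same} part $U_i$; then uniform discreteness (each $G$-orbit meets $U_i$ at most once, applied to $h_j^{-1}g$) forces $h_j^{-1}g$ to fix $W_j$ pointwise, i.e.\ $g\rest_{W_j}=h_j\rest_{W_j}$. A ``uniform bound on complexity'' alone does not produce this pointwise agreement --- you correctly identified this step as the main obstacle, but the idea that resolves it is missing from the proposal.
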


\begin{proof}

\textbf{(i) $\Rightarrow$ (ii):}
Let $G=\Full(F)$ for some finite $F<\Homeo(X)\cong\Aut(\B)$ acting uniformly discretely. 
Appealing to Theorem \ref{thm:bratteli_condition}, let $B=(V,E)$ be the Bratteli diagram associated to some decomposition $G=\bigcup_{n\in\N}G_n, \B=\bigcup_{n\in\N}\B_n$, chosen so that $B\setminus(V_0\cup E_1)$ is a multitree.

By Proposition~\ref{prop:F_is_ud}, there is some clopen partition $\mc{P}$ of $X$ on which $F$ acts in such a way that the setwise stabiliser of a part coincides with the part's pointwise stabiliser. 
Because $\B=\bigcup_{n\in\N}\B_n$, there is some $N\in\N$ such that $\atom(\B_N)$ is a refinement of $\mc{P}$. 
In particular, $F$ acts on $\atom(\B_N)$ and therefore also on $\atom(\B_m)$ for every $m\geq N$. 
This means that the vertices in $V_m$ are the $F$-orbits (=$G_m$-orbits)  on $\atom(\B_m)$ for each $m\geq N$.

Suppose for a contradiction that $d(u)>d(v)$ for some $v\in V_N$ and descendant  $u\in V_m$ of $v$ with $m\geq N$. 
In other words (and abusing notation) there is some $\tilde{v}_N \in \atom(\B_N)$ which contains some $\tilde{w}_m \in \atom(\B_m)$ such that the $F$-orbit of $\tilde{w}_m$ is larger than the $F$-orbit of $\tilde{v}_N$. 
Because $F$ preserves the Boolean subalgebras $\B_N$ and $\B_m$, the extra elements in the orbit of $\tilde{w}_m$ must be contained in the orbit of $\tilde{v}_N$, but this means that there are at least two paths between $v$ and $w$, contradicting the uniform discreteness criterion. 

\textbf{(ii) $\Rightarrow$ (iii):}
Suppose that $(V,E)$ is the Bratteli diagram associated to some decomposition $G=\bigcup_{n\in\N}G_n, \B=\bigcup_{n\in\N}\B_n$
 and let $U$ be a clopen subset of $X$. 
Then $U$ is the union of some atoms of $\B_n$, which correspond to vertices of $\tilde{V}_n$ in the extended Bratteli diagram. 
Taking a larger $n$ if necessary, assume that $n\geq N$ from the statement in (ii) and consider some $\tilde{v}_n\in\tilde{V}_n$ corresponding to an atom contained in $U$. 
Then, by the assumption in (ii), for each $m\geq n$,  the $G_n$-orbit of $\tilde{v}_n$ has the same size as the $G_m$-orbit of $\tilde{w}_m$ where $\tilde{w}_m\in\tilde{V}_m$ is a descendant of $\tilde{v}_n$. 
This implies that 
$$\bigcup_{\tilde{w}_m\leq \tilde{v}_n}\bigcup_{g\in G_m}g(\tilde{w}_m)=\bigcup_{g\in\G_m}g(\tilde{v}_n)=\bigcup_{g\in G_n}g(\tilde{v}_n)$$
for all $m\geq n$. 
Therefore $\bigcup_{g\in G}g(\tilde{v}_n)=\bigcup_{g\in G_n}g(\tilde{v}_n)$. 
The latter set is in $\B$ as it is a finite union of elements of $\B$. 
Since $U$ is the disjoint union of (finitely many) atoms $\tilde{v}_n$, we have that $\bigcup_{g\in G}g(U)$ is the union of finitely many clopen sets (elements of $\B$) and is therefore itself clopen too.

%\underline{(i) $\Rightarrow$ (iii)}:
%	Suppose that $G$ is the piecewise full group of some finite $F<\Homeo(X)$ and let $U$ be a clopen subset of $X$. 
%	By definition, for each $g\in G$ there is a clopen partition $X=\bigsqcup_{i=1}^nU_i$ and $f_1, \dots, f_n\in F$ such that $g\rest_{U_i}=f_i\rest_{U_i}$. 
%	In particular, $$g(U)=\bigsqcup_{i=1}^nf_i(U\cap U_i)\subseteq \bigcup_{i=1}^nf_i(U)\subseteq \bigcup_{f\in F}f(U).$$
%	Thus, since $F\leq G$, we have $\bigcup_{g\in G}g(U)=\bigcup_{f\in F}f(U)$, which is indeed a clopen subset, since $F$ is finite. 
%	
%%	Conversely, suppose that for every clopen $U$, the union $\bigcup_{g\in G}g(U)$ of all its translates by $G$ is clopen. 
%%	In particular, if $G$ is uniformly discrete with respect to a partition $\mc{P}=\{U_1, \dots, U_n\}$, then each $V_i:=\bigcup_{g\in G}g(U_i)$ is clopen.
%%	Consider the partition $\mc{Q}$ formed by the atoms of the Boolean algebra generated by $\{U_i, V_i\colon i=1,\dots, n\}$. 
%%	We claim that $G$ is the piecewise full group of the finite subgroup $G_{\mc{Q}}$ of elements of $G$ that preserve $\mc{Q}$. 
%%	..................................................................................
%	
	
\textbf{(iii) $\Rightarrow$ (i)}:
	Suppose that  for every clopen $U$, the union $\bigcup_{g\in G}g(U)$ of all its translates by $G$ is clopen.
	Let $\mc{P}=\{U_1, \dots, U_k\}$ be a partition witnessing the uniform discreteness of $G$. 
	By Proposition~\ref{prop:ud_implies_ample}, the group $G$ is locally finite and can be written as a directed union $G=\bigcup_{\N}G_n$ of finite subgroups $G_n$ which act faithfully on a refinement $A_n$ of $\mc{P}$. 
	
	%	Suppose for a contradiction  that for some $U_i\in \mc{P}$ and every $n\in \NN$, $\bigcup_{g\in G_n} g(U_i)\subsetneq \bigcup_{h\in G_{n+1}} h(U_i)$. 
	Then  $\{\bigcup_{g\in G_n} g(U_i) \colon n\in\N\}$ forms an open cover of $\bigcup_{g\in G} g(U_i)$, which we have assumed to be a clopen subset of $X$, and therefore compact. 
	This means that for each $i$ there is some $N_i\in\N$ such that 
	$$\bigcup_{g\in G_{N_i}} g(U_i)=\bigcup_{g\in G_m}g(U_i)=\bigcup_{g\in G}g(U_i)$$ for all $m\geq N_i$. 
	In particular, there is $N=\max \{N_i \colon i=1, \dots, k\}$ such that $\bigcup_{g\in G_N}g(U_i)=\bigcup_{g\in G}g(U_i)$ for every $i\in \{1, \dots, k\}$. 
	
	We claim that $G$ is the piecewise full group of $G_N$. 
	Let $g\in G=\bigcup_{n\in\N}G_n$, and let $n\in N$ be smallest such that $g\in G_n$. 
	If $n\leq N$ there is nothing to show, so suppose that $n>N$. 
	Let $\mc{P}_N=\{V_1, \ldots, V_{k_N} \}$ be the refinement of $\mc{P}$ on which $G_N$ acts faithfully, and $\mc{P}_n=\{W_1, \dots, W_{K_N}\}$ the refinement of $\mc{P}_N$ on which $G_n$ acts faithfully. 
	For each $i\in\{1, \ldots, k\}$ we have assumed that $g(U_i)\subseteq \bigcup_{h\in G_N}h(U_i)$. 
	Suppose that the parts of $\mc{P}_n$, respectively, $\mc{P}_N$, contained in $U_i$ are labelled by $i_n\subseteq\{1, \ldots, k_n\}$, respectively, $i_N\subseteq\{1,\ldots,k_N\}$.
	Then $$g(U_i)=\bigsqcup_{j\in i_n}g(W_j)\subseteq\bigcup_{h\in G_N}h(U_i)=\bigcup_{h\in G_N}\bigsqcup_{l\in i_N}h(V_l).$$
	Since $g$ preserves $A_n$, and $G_N$ preserves $A_N$ and $A_n$ refines $A_N$, for each $j\in i_n$ there is some $h_j\in G_N$ and $l\in i_N$ such that $g(W_j)\subseteq h_j(V_l)$. 
	Because both $W_j$ and $V_l$ are contained in $U_i$, the uniform discreteness of the $G$-action with respect to $\mc{P}$ implies that $W_j\subseteq V_l$ and $g\rest_{W_j}=h_j\rest_{W_j}$. 
	Repeating the argument for each $U_i$ yields that for each $W_j\in \mc{P}_n$ there exists $h_j\in G_N$ such that $g\rest_{W_j}=h_j\rest_{W_j}$, as required. 
\end{proof}

It is now easy to see that not all uniformly discrete ample groups are of finite origin.  We note the following special case, which is also illustrated by the group $G$ in Section~\ref{sec:example} below.

\begin{corollary}
Let $F < \Homeo(X)$ be a finite group acting uniformly discretely, let $x \in X$ and let $G$ be the stabiliser of $x$ in $\Full(F)$; suppose that $G \neq \Full(F)$.  Then $G$ is uniformly discrete and ample, but it is not of the form $\Full(H)$ for any finite group of homeomorphisms $H$.
\end{corollary}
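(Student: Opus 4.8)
The plan is to verify the two assertions separately: first that $G$ is uniformly discrete and ample, and then that $G$ is not of finite origin, the latter by showing that condition (iii) of Proposition~\ref{prop:full_finite_iff_top_bratteli} fails.

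For the first assertion I would start from the fact that $\Full(F)$ is uniformly discrete with respect to some clopen partition $\mc{P}$ (Proposition~\ref{prop:F_is_ud}). Since every $G$-orbit is contained in a $\Full(F)$-orbit, uniform discreteness with respect to $\mc{P}$ is inherited by the subgroup $G$. To conclude that $G$ is ample it then suffices, by Proposition~\ref{prop:ud_implies_ample}, to check that $G$ is piecewise full, i.e.\ $\Full(G)=G$. This is a short direct computation: if $g\in\Full(G)$ is witnessed by a clopen partition $X=U_1\sqcup\dots\sqcup U_n$ and elements $g_1,\dots,g_n\in G$, then $g\in\Full(\Full(F))=\Full(F)$ because each $g_i\in G\le\Full(F)$, while $x$ lies in some $U_j$ and $g(x)=g_j(x)=x$ since $g_j\in\Stab(x)$; hence $g\in\Full(F)\cap\Stab(x)=G$.

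For the second assertion, note that if $G=\Full(H)$ for some finite $H$ then, $G$ being ample, it would be ample of finite origin, so Proposition~\ref{prop:full_finite_iff_top_bratteli}(iii) would force $\bigcup_{g\in G}g(U)$ to be clopen for \emph{every} clopen $U$. I would therefore exhibit a single clopen $U$ for which this union fails to be closed. The hypothesis $G\neq\Full(F)$ means the $\Full(F)$-orbit of $x$ contains a point $x_1\neq x$; fix $h\in\Full(F)$ with $h(x)=x_1$. Shrinking its domain, choose a clopen neighbourhood $V$ of $x$ so small that $V$ and $W:=h(V)$ are disjoint clopen neighbourhoods of $x$ and $x_1$ respectively (possible since $x\neq x_1$ and $h$ is continuous); in particular $x\notin W$. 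Set $U:=W$.

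The heart of the argument is a sequence in $G$ pushing points of $U$ towards $x$. Using that $X$ is perfect, choose a clopen neighbourhood basis $V=V_0\supsetneq V_1\supsetneq\dots$ at $x$ with each annulus $A_n:=V_n\setminus V_{n+1}$ nonempty and $\mathrm{diam}(V_n)\to 0$. Since $A_n\subseteq V$ and $h(A_n)\subseteq W$ are disjoint clopen sets both avoiding $x$, piecewise fullness of $\Full(F)$ supplies the transposition $\tau_n\in\Full(F)$ equal to $h$ on $A_n$, to $h^{-1}$ on $h(A_n)$, and to $\id$ elsewhere; as $x\notin A_n\cup h(A_n)$ we have $\tau_n(x)=x$, so in fact $\tau_n\in G$. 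Picking any $u_n\in h(A_n)\subseteq W=U$ gives $\tau_n(u_n)\in A_n\subseteq V_n$, whence $\tau_n(u_n)\to x$, so $x$ lies in the closure of $\bigcup_{g\in G}g(U)$. On the other hand $x$ is not in this union: if $g(u)=x$ with $g\in G$ and $u\in U$ then $u=g^{-1}(x)=x$ because $g$ fixes $x$, contradicting $x\notin U$. Hence $\bigcup_{g\in G}g(U)$ is not closed, so not clopen, and $G$ is not of finite origin. The step I would treat most carefully — and the only delicate one — is the construction of the $\tau_n$: one must make $V$ small enough that $W=h(V)$ avoids $x$ (so each $\tau_n$ genuinely fixes $x$ and lands in $G$) while still arranging that the images $\tau_n(u_n)$ accumulate at $x$; everything else (inheritance of uniform discreteness, piecewise fullness of the point stabiliser, and the closure computation) is routine.
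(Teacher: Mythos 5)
Your proposal is correct and follows essentially the same route as the paper: inherit uniform discreteness from $\Full(F)$, observe that the point stabiliser is piecewise full (hence ample by Proposition~\ref{prop:ud_implies_ample}), and then refute finite origin by exhibiting a clopen neighbourhood of a point $x_1 \neq x$ in the $\Full(F)$-orbit of $x$ whose union of $G$-translates accumulates at $x$ without containing it, violating condition (iii) of Proposition~\ref{prop:full_finite_iff_top_bratteli}. The only cosmetic differences are that the paper takes the moving element $f$ in $F$ itself and states the identity $U \cap \bigcup_{g \in G} g(f(U)) = U \setminus \{x\}$ directly, whereas you take $h \in \Full(F)$ and make the implicit transposition argument explicit via a nested neighbourhood basis and annuli.
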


\begin{proof}
Since $G$ is a subgroup of $\Full(F)$, it is uniformly discrete; by construction $G$ is also piecewise full, hence ample by Proposition~\ref{prop:ud_implies_ample}.
Since $G \neq \Full(F)$, there exists $f \in F$ such that $fx \neq x$.  By continuity there is a clopen neighbourhood $U$ of $x$ such that $U$ and $fU$ are disjoint.  We then see that
\[
U \cap \bigcup_{g \in G}g(f(U)) = U \setminus \{x\},
\]
so $\bigcup_{g \in G}g(f(U))$ is not closed, and thus $G$ does not satisfy condition (iii) of Proposition~\ref{prop:full_finite_iff_top_bratteli}.  Hence $G$ cannot be of the form $G = \Full(H)$ for any finite $H < \Homeo(X)$.
\end{proof}

One might wonder at this point whether all uniformly discrete ample groups arise as \emph{subgroups} of piecewise full groups of finite groups.  In fact, they do not all arise in this way; to explain why not, a more involved example is needed.

\begin{example}\label{eg:not_stab_finite_ud}
	For simplicity and concreteness, we will focus on a particular example, which can be easily generalised. 
	As in Example \ref{ex:c2_not_ud},  we consider $X$ as the boundary of the infinite binary rooted tree and more specifically as the infinite words in the alphabet $\{0,1\}$. 
	Let $$Y_1:=\{01^n0^{\infty} : n\in \N \}=\{0^{\infty}, 010^{\infty}, 0110^{\infty}, \dots \}\subset 0X=\{0w : w\in X \}$$
	and $Y_2=\{1^{\infty}\}\subset 1X$.
	Then $0X\setminus Y_1$ and $1X\setminus Y_2$ are both noncompact Cantor sets and therefore homeomorphic. 
	There are many possible homeomorphisms; we consider one, $h$, that induces a bijection of clopen subsets of $X$, described thus: enumerate the ``obvious'' clopen subsets $c_n$ of $X_1\setminus Y_1$, by short-lex order and ignoring those that are contained in one that has already been enumerated
	\begin{multline*}
	c_1=001X, c_2=0001X, c_3=0101X, c_4=0^41X, c_5=01001X, \\
	c_6=01101X,	 c_7=0^51X, c_8=010^31X, c_9=011001X, c_{10}=01^301X, \dots
	\end{multline*}
	The homeomorphism $h$ exchanges each clopen $c_n$ with $1^n0X$, for $n\geq 1$ in some way (which way exactly does not matter); see Figure \ref{fig:not_stab_finite_ud}.

\begin{figure}
	
\includegraphics[width=0.9\linewidth]{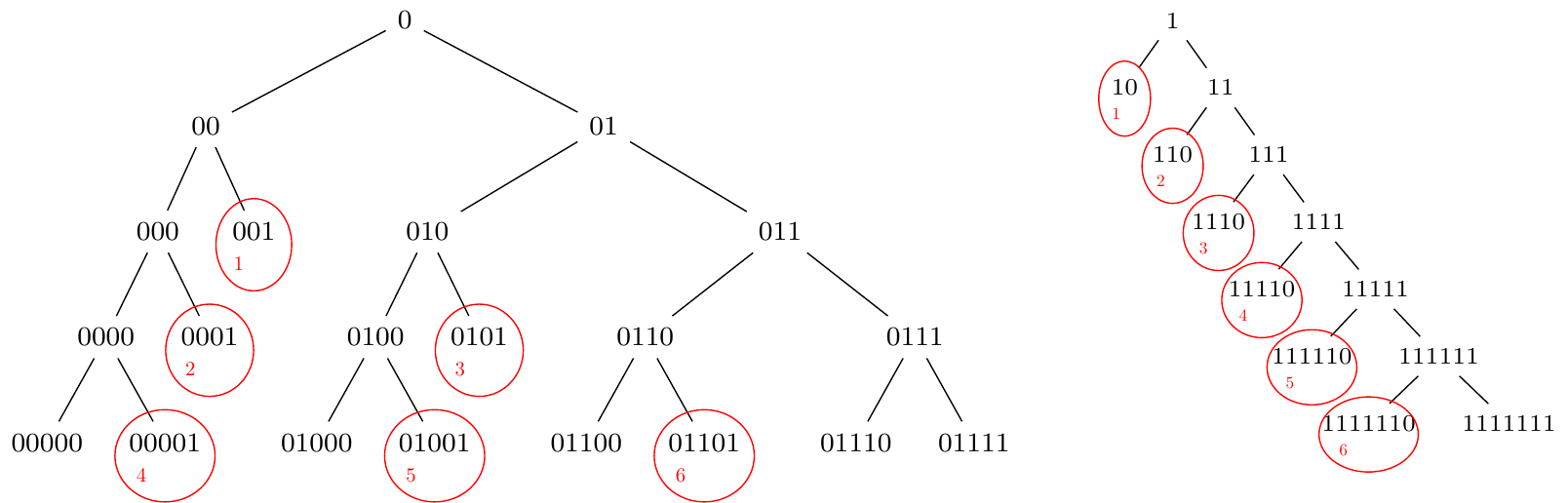}
	
\caption{The homeomorphism $h$ exchanges the clopens below the vertices  circled in red on the left with those with corresponding red number on the right.}
\label{fig:not_stab_finite_ud}

\end{figure}

	This homeomorphism obviously cannot be extended to all of $X$. 
	However we can take the group $G$ generated by homeomorphisms $h_U$ of the form
	\[
	h_U(x)=\begin{cases} h(x) &\text { if } x \in U\\
	h^{-1}(x) & \text{ if } x \in h(U)\\
	x &\text{ otherwise }
	\end{cases},
	\]
	where $U$ ranges over the compact open subsets of $0X \setminus Y_1$.
	By construction, $G$ is piecewise full and uniformly discrete (take $0X, 1X$ as a partition of uniform discreteness). 
	
	Suppose  that $G \leq \Full(F)$ for some finite $F<\Homeo(X)$.
	We will arrive at a contradiction by showing that $1^{\infty}$ must be taken to the points of $Y_1$, which cannot be achieved using only the finitely many elements from $F$. 
	
	Since $G$ is piecewise full, there are elements $g_n\in G$ such that $g_n$ swaps only $c_n$ and $1^n0X$ and fixes the rest of $X$, for $n\geq 1$. 
	Take a subsequence $(g_{n_m})_m\subset (g_n)_n$ such that $c_{n_m} \subseteq 0^m1X$ for $m \ge 2$.
	As $G\leq \Full(F)$, each $g_{n_m}$ is a gluing of finitely many restrictions of elements of $F$. That is, there is a finite partition $0^m1X=\bigsqcup_{i=1}^{r_m}U_{n_{m_i}}$ of $0^m1X$ into clopen sets such that $g_{n_m}|_{U_{n_{m_i}}}=f_{n_{m_i}}|_{U_{n_{m_i}}}$ for some elements $f_{n_{m_i}}\in F$. 
	As $F$ is finite and $(n_m)_m$ is infinite, some element, say $f_1\in F$, must appear infinitely often among the $f_{n_{m_i}}$. 
	The union of clopens $U_{n_{m_i}}$ corresponding to $f_1$ in the above setting has $0^{\infty}$ as its only boundary point, so $f_1(0^{\infty})$ must be the only boundary point of the union of  $f_1(U_{n_{m_i}})=g_{n_m}(U_{n_{m_i}})\subset 1^{n_m}0X$. 
	So $f_1(0^{\infty})=1^{\infty}$. 	
	Repeat this procedure for $01^l0^mX, l,m\geq 1$ to obtain that $f_l(01^l0^{\infty})=1^{\infty}$. 
	
	Now, $F$ is finite so there must be some repetition among the $f_l$; that is, $f_l=f_k$ for some $l\neq k$, which means that $f_l(01^l0^{\infty})=f_l(01^k0^{\infty})=1^{\infty}$, contradicting the assumption that $f_l$ is a homeomorphism of $X$. 
\end{example}

%The argument in this example does not work if we suppose that $Y_1$ is finite. 

\begin{qu}\label{qu:which_ud_groups_finite_origin}
	Which uniformly discrete piecewise full groups arise as subgroups of ample groups of finite origin? 
	Can they be distinguished by their Bratteli diagrams?
\end{qu}

\section{Example: isomorphic groups, non-isomorphic dynamical systems}\label{sec:example}

We present an illustrative example of two different ample actions of the same group, one uniformly discrete, the other not.

Let $I_n$ be all $n$-digit  strings over the alphabet $\{0,1\}$,  except for the all-$1$ string $1^n$ (so $|I_n| = 2^n-1$); let $\Gamma_n$ be the elementary abelian group $\bigoplus_{i \in I_n}\grp{g_i}$, where each generator $g_i$ has order $2$.  
The group we are interested in is the direct limit $\Gamma=\varinjlim_{n\geq 1} \Gamma_n$, with diagonal embeddings:
$\Gamma_n$ embeds in $\Gamma_{n+1}$ by setting $g_i = g_{i0} + g_{i1}$ for all $i \in I_n$.

We consider two different embeddings $\alpha, \beta: \Gamma\rightarrow \Homeo(X)$  that give two different ample groups $G=\alpha(\Gamma)$ and $H=\beta(\Gamma)$, as follows. 
First consider $X$ as the boundary of the infinite binary rooted tree, thought of as the Cayley graph of the free monoid $\{0,1\}^*$.
In other words, we think of $X$ as the set $\{0,1\}^{\N}$ of (right) infinite words over the alphabet $\{0,1\}$.  The Boolean algebra $\B$ of clopen subsets of $X$ then has a standard decomposition $\B = \bigcup_n \B_n$, where $\B_n$ is the subalgebra with atoms $\{wX \colon |w| = n\}$.

Let $G$ be the stabiliser of the point $1^{\infty} \in X$ in $\Full(C_2)$ where $C_2$ acts on $X$ by swapping the first letter of every infinite word. 
Observe that $G$ and $\Full(C_2)$ are uniformly discrete with respect to the partition $0X \sqcup 1X$.
Define $\alpha: \Gamma \rightarrow G$ by letting $\alpha(g_i)$ swap $0iw$ and $1iw$ for all $w \in \{0,1\}^{\N}$.
Figure \ref{fig:Gaction} shows the first few levels of this action.

\begin{figure}[h]
	\begin{center}
	\includegraphics[width=0.9\linewidth]{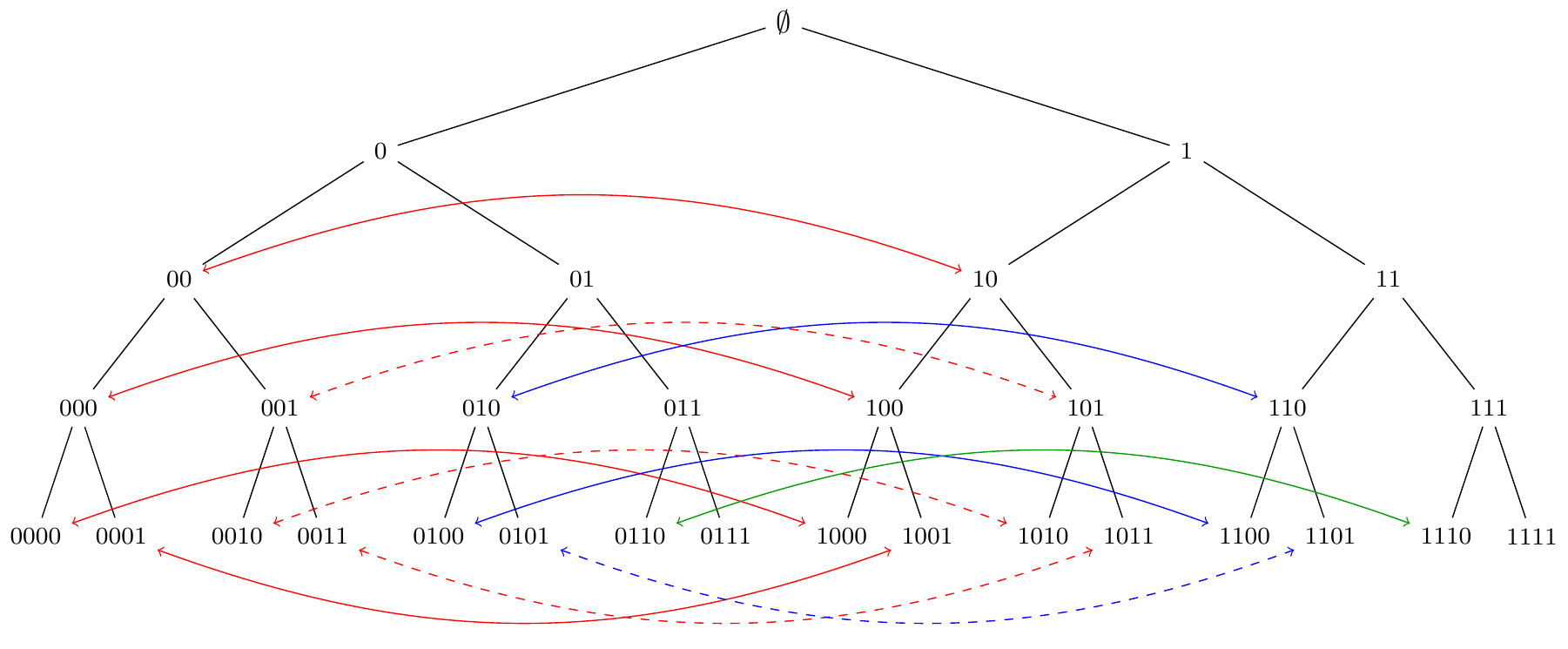}
	\end{center}
	\caption{Action of $G$ on first few levels of $\{0,1\}^*$. 
		The embeddings are colour-coded: ${\color{red}C_2}\hookrightarrow{\color{red}C_2\oplus C_2}\oplus{\color{blue}C_2}\hookrightarrow {\color{red}C_2\oplus C_2\oplus C_2\oplus C_2}\oplus {\color{blue}C_2\oplus C_2}\oplus {\color{green!60!black}C_2}$.
	}
	\label{fig:Gaction}
\end{figure}

We define $\beta$ on the generators $g_i$ as as follows: 
suppose $i = 1^m0j$ for some $m \ge 0$ and some (possibly empty) finite string $j$. 
 Then $\beta(g_i)$ swaps $1^m00jw$ and $1^m01jw$ for all $w \in \{0,1\}^{\N}$.

For $m \ge 0$, let $h_m$ be the homeomorphism of $X$ swapping $1^m00w$ and $1^m01w$ for every $w\in\{0,1\}^{\N}$.
  This time, we see that $\beta(\Gamma_n)$ is the piecewise full group of $\grp{h_m \mid 0 \le m \le n}$ acting on $\B_{n+2}$, and consequently $H = \beta(\Gamma)$ is  the piecewise full group of $\langle h_m\colon m\geq 0 \rangle$ acting on $X$.
  Also,  $H=\Full(\langle f\rangle)$ where $f=h_0h_1h_2\cdots $ is a homeomorphism of order 2  swapping $1^m00w$ and $1^m01w$ for every $w\in\{0,1\}^{\N}$ and every $m\in  \N$. 
The action on the first few levels is shown in Figure \ref{fig:Haction}.

\begin{figure}[h]
	
\begin{center}
	\includegraphics[width=0.9\linewidth]{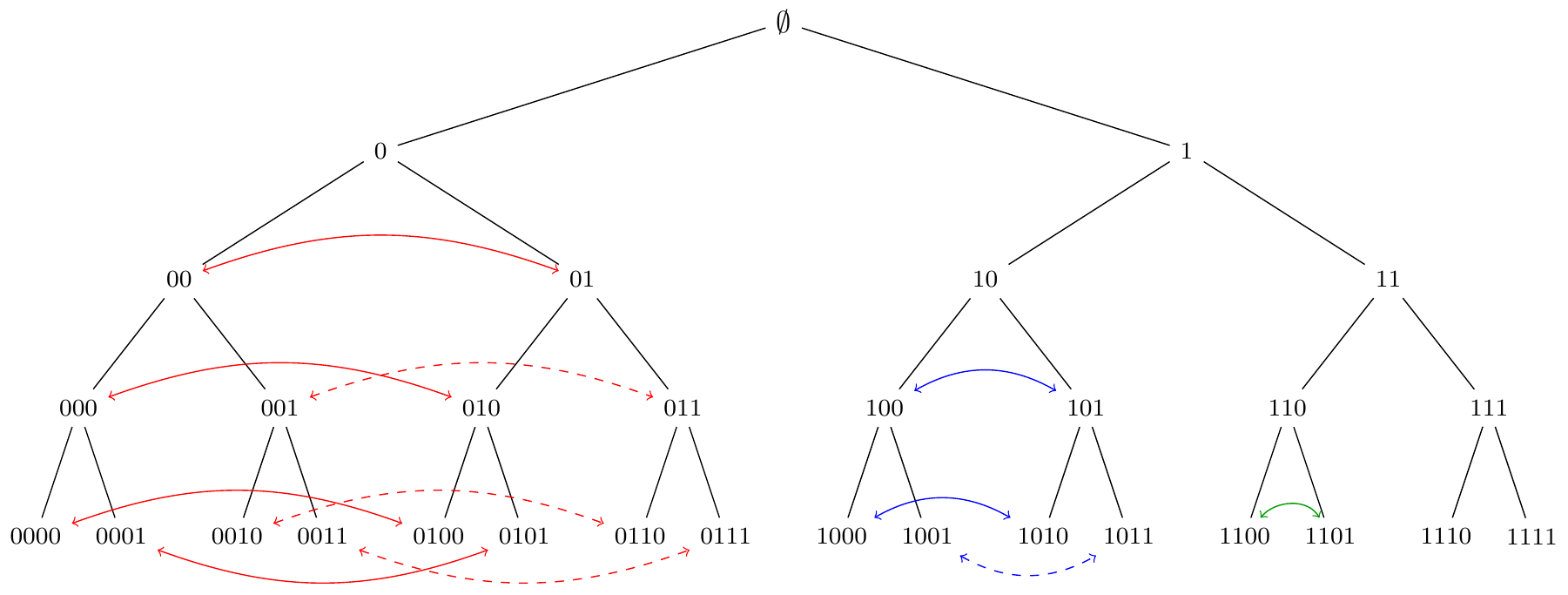}
\end{center}
	
	\caption{Action of $H$ on first few levels of $\{0,1\}^*$. 
		The embeddings are colour-coded: ${\color{red}C_2}\hookrightarrow{\color{red}C_2\oplus C_2}\oplus{\color{blue}C_2}\hookrightarrow {\color{red}C_2\oplus C_2\oplus C_2\oplus C_2}\oplus {\color{blue}C_2\oplus C_2}\oplus {\color{green!60!black}C_2}$.
	}
	\label{fig:Haction}
\end{figure}

We see that $G$ and $H$ are not conjugate in $\Homeo(X)$, for two reasons.  First, $G$ has two fixed points, namely $01^{\infty}$ and $1^{\infty}$, while $H$ only has one, namely $1^{\infty}$.  Second, $G$ is uniformly discrete, while $H$ is not, by Proposition~\ref{prop:F_is_ud}, using that $H=\Full(\langle f\rangle )$.
% in any partition of uniform discreteness, one of the parts must contain the set $1^nX$ for some $n\in \N$, which contains plenty of $H$-orbits of size 2, thanks to the elements $h_m, m\geq n$. 

As should be expected (cfr. \cite[3.6]{krieger}), the Bratteli diagrams and dimension ranges associated to the different actions $G, H$ of $\Gamma$  are  different. 
Let us see them explicitly. 

For both $G$ and $H$, we telescope the standard decomposition $\B=\bigcup_n \B_n$ by removing the level $\B_1$.
Because we have skipped the first level of the tree, the extended Bratteli diagrams corresponding to $(G,\B)$ and $(H, \B)$ both have  $2^{n+1}$ vertices at level $n$, organised into $2^n-1$ orbits of size 2 and two orbits of size 1. 

\underline{For the action of $G$, }
$0u$ and $1u$ are in the same orbit for all $u\in\{0,1\}^{n-1} \setminus\{1^{n-1}\}$ while $01^{n-1}$ and $1^n$ are in orbits by themselves. 
%
%The adjacency matrices between levels 1, 2 and 2, 3 are shown below. The headings of rows and columns indicate an orbit representative.
%\begin{center}
%\begin{tabular}{c|ccccc}
% 	&	000 & 001 & 010 & 011 & 111\\
% 	\hline
%00 	& 1		& 1	  &   	&     &    \\
%01 	&  		&  	  &  1	&  1  &    \\
%11	&  		&  	  &   	&  1  &  1 
%\end{tabular}
%
%\bigskip
%
%
%\begin{tabular}{c|ccccccccc} 
%	& 0000 & 0001 & 0010 & 0011 & 0100 & 1101 & 0110 & 0111 & 1111 \\ 
%	\hline
%	000 & 1 & 1 &   &   &   &   &   &   &   \\ 
%	001 &   &   & 1 & 1 &   &   &   &   &   \\ 
%	010 &   &   &   &   & 1 & 1 &   &   &   \\ 
%	011 &   &   &   &   &   &   & 1 & 1 &   \\ 
%	111 &   &   &   &   &   &   &   & 1 & 1 \\ 
%\end{tabular} 
%\end{center}
%
%
This pattern repeats at all levels, from which it can be seen that there are no multiple paths in the Bratteli diagram.
Figure \ref{fig:G} shows the first few levels of the Bratteli diagrams of this action.

\begin{figure}[h]
\begin{center}	
	\includegraphics%[width=\linewidth]
{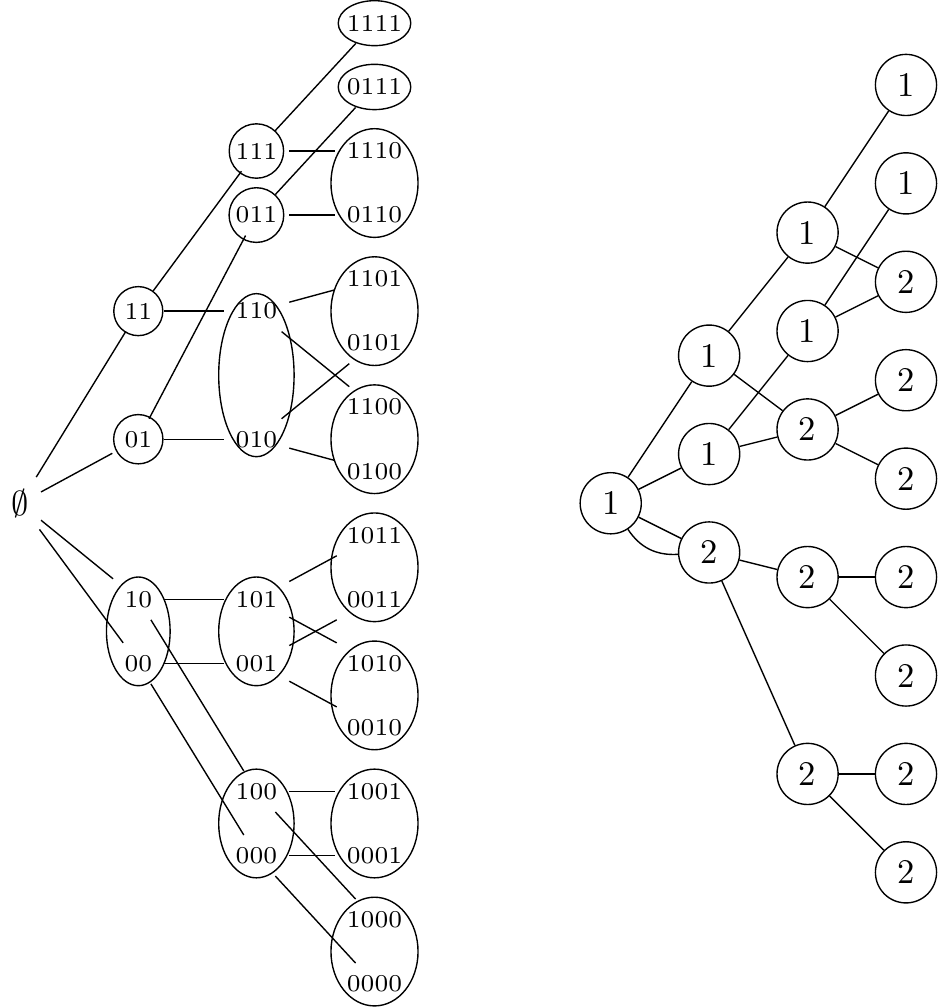}
\end{center}

	\caption{First levels of Bratteli diagrams of $G$}
	\label{fig:G}
\end{figure}

\underline{For the action of $H$}, the orbits of size 1 are  $1^n0$ and $1^n1$ while $1^m00w$ and $1^m01w$ are in the same orbit, where $m+|w|=n-1$ and $w\in\{0,1\}^{n-1-m}$. 
%
%The adjacency matrices between levels 1, 2 and 2, 3 are shown below. The headings of rows and columns indicate an orbit representative.
%\begin{center}
%	\begin{tabular}{c|ccccc}
%		&	000 & 001 & 100 & 110 & 111\\
%		\hline
%		00 	& 1		& 1	  &   	&     &    \\
%		10 	&  		&  	  &  2	&    &    \\
%		11	&  		&  	  &   	&  1  &  1 
%	\end{tabular}
%	
%	\bigskip
%	
%	
%	\begin{tabular}{c|ccccccccc} 
%		& 0000 & 0001 & 0010 & 0011 & 1000 & 1001 & 1100 & 1110 & 1111 \\ 
%		\hline
%		000 & 1 & 1 &   &   &   &   &   &   &   \\ 
%		001 &   &   & 1 & 1 &   &   &   &   &   \\ 
%		100 &   &   &   &   & 1 & 1 &   &   &   \\ 
%		110 &   &   &   &   &   &   & 2 &  &   \\ 
%		111 &   &   &   &   &   &   &   & 1 & 1 \\ 
%	\end{tabular} 
%\end{center}
%
The pattern here also repeats at all levels, and we in fact have multiple paths between infinitely many pairs of vertices. 
The first few levels of the Bratteli diagrams of this action are shown in Figure \ref{fig:H}.
Notice the multiple edges at each level.

\begin{figure}
\begin{center}
	\includegraphics%[width=\linewidth]
{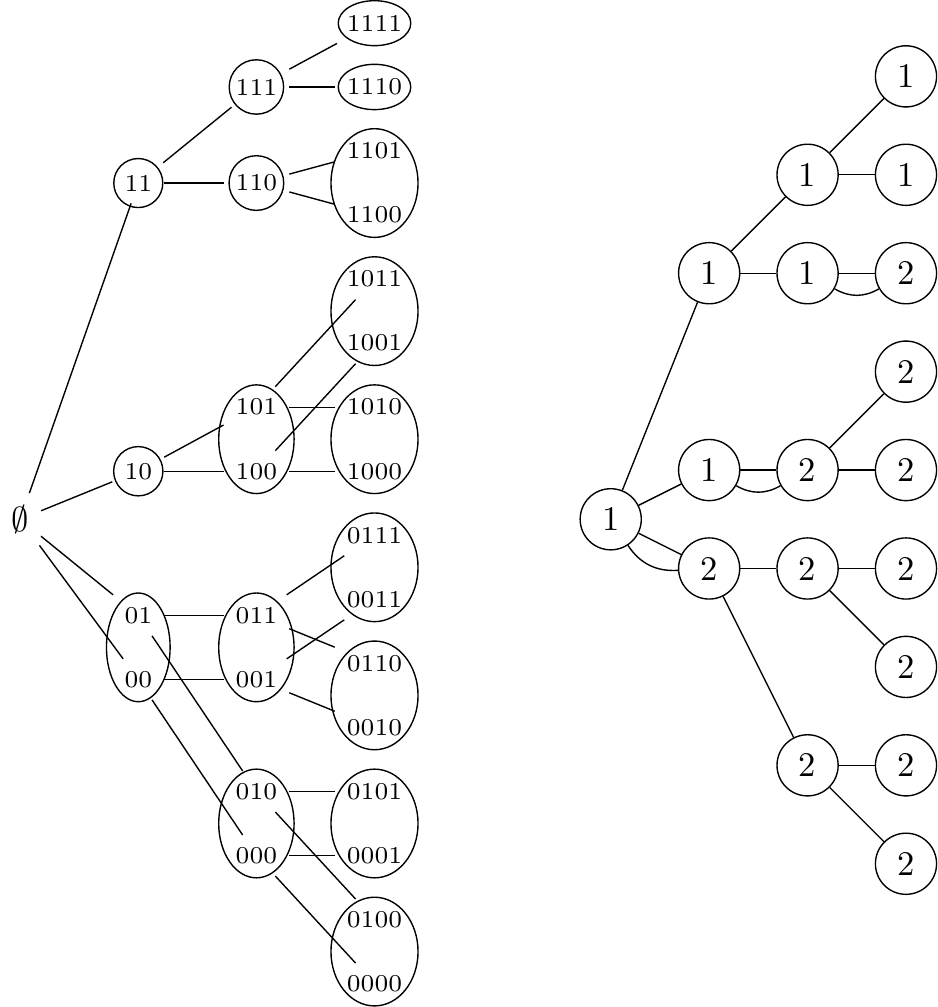}
\end{center}
	\caption{First levels of Bratteli diagrams of $H$}
	\label{fig:H}	
\end{figure}

On the other hand, the figures give us a clue as to what precisely is the difference between $G$ and $H$. 
Identifying the two top-most all-1s infinite paths in the Bratteli diagram in Figure \ref{fig:G},  we obtain a graph isomorphic to that in Figure \ref{fig:H}. 

Indeed, the action of $H$ is a quotient of that of $G$.
Let $X'$ be the quotient $X/\{01^\infty \sim 1^\infty\}$ obtained by identifying the fixed points of $G\leq\Homeo(X)$.
The action of $G$ on $X$ can naturally be pushed forward to an action of $G$ on $X'$; explicitly, the image of this action is 
$\Full(\grp{h'_m \mid m \ge 0})$ where $h'_m\in \Homeo(X')$ acts by swapping $01^m0w$ and $11^m0w$ for $w \in \{0,1\}^{\N}$.
This action is topologically conjugate to that of $H$ via the homeomorphism $\theta\colon X \rightarrow X'$ defined  by 
 $$\theta(1^\infty) = \{01^\infty,1^\infty\}, \qquad \theta(1^m0\delta w) = \delta 1^m0w,  \text{ for all } m \ge 0, \; \delta \in \{0,1\}, \; w\in\{0,1\}^{\N}.$$
   Indeed, $$\theta(h_m(1^m0\delta w))=\theta(1^m0\overline{\delta}w)= \overline{\delta}1^m0w=h'_m(\delta1^m0w)=h'_m(\theta(1^m0\delta w))$$ for all $m\geq 0$ and $w\in\{0,1\}^{\N}$ where $\overline{\delta}$ denotes the opposite value of $\delta=0,1$.
 Therefore %$\theta$ intertwines $h_m$ and $h'_m$ for all $m \ge 0$, and hence 
 $\theta$ is a conjugation from  $(X,\Full(\grp{h_m}))$ to $(X',\Full(\grp{h'_m}))$.

This illustrates how one can obtain an action that is not uniformly discrete as a quotient of a uniformly discrete action.


\begin{thebibliography}{10}
	
	\bibitem{BDK_cantortops} S. ~Bezuglyi, A.H. ~ Dooley and J.~ Kwiatkowski, \textit{Topologies on the group of homeomorphisms of a Cantor set},Topol. Methods Nonlinear Anal., 
	\textbf{27} (2006) no 2, 299--331.
	
	
	
	\bibitem{univ_alg_book}S.N.~ Burris, H.P.~ Sankappanavar, \textit{A course in universal algebra, the millennium edition}. Available from author's website: \href{https://www.math.uwaterloo.ca/~snburris/htdocs/ualg.html}{https://www.math.uwaterloo.ca/~snburris/htdocs/ualg.html}
	
	
	\bibitem{capracemonod}P.-E.~ Caprace, N.~ Monod,\textit{Decomposing locally compact groups into simple
	pieces}, Math. Proc. Cambridge Philos. Soc. \textbf{150} (2011), 97--128.
	
	
	
	\bibitem{CRW2} P.-E.~Caprace, C.D.~Reid and G.A.~Willis, \textit{Locally normal subgroups of totally
		disconnected groups. Part II: Compactly
		generated simple groups}, Forum Math. Sigma (2017), Vol. 5, e12, 89 pages, 	doi:10.1017/fms.2017.8
	
	\bibitem{davidson_book}K.R.~ Davidson, \textit{$C^*$-algebras by example}. Fields Institute Monographs 6. Amer. Math. Soc., 1996. 
	
	\bibitem{DudkoMedynets} A.~Dudko and K.~Medynets, \textit{On characters of inductive limits of symmetric groups}, J. Funct. Anal. \textbf{264} (2013), no. 7, 1565--1598.
	
	\bibitem{elliott}G.A.~ Elliott, \textit{On the classification of inductive limits of sequences of semisimple finite-dimensional
	algebras.} J. Algebra \textbf{38}, (1976), 29--44.
	
	\bibitem{newdirs_neretin}\L{}.~ Garncarek and N.~ Lazarovich, \textit{The Neretin groups}. In  P.-E.~ Caprace and N.~ Monod (ed.) \textit{New directions in locally compact groups}, LMS Lecture Note Series: 447, Cambridge University Press, 2018. 
	
	\bibitem{pseudogroups} A.~Garrido, C.D.~Reid and D.~ Robertson, \textit{Locally compact piecewise full groups of homeomorphisms}. In preparation. 
	
	\bibitem{HartleyZalesskii}B.~ Hartley and A.~E.~ Zalesski, \textit{Confined subgroups of simple locally finite groups and ideals of their group rings.} J. Lond. Math. Soc.,  \textbf{55}(2), 1997, 210--230. 
	
	
	\bibitem{juschenkomonod}K.~ Juschenko, N.~ Monod, \textit{Cantor systems, piecewise translations
		and simple amenable groups}, Ann. of Math. \textbf{178} (2013), 775--787,
	\href{http://dx.doi.org/10.4007/annals.2013.178.2.7}{doi:10.4007/annals.2013.178.2.7}
	
	\bibitem{Kapoudjian}C.~ Kapoudjian, \textit{Simplicity of Neretin’s group of spheromorphisms}, Ann. Inst.
	Fourier (Grenoble)\textbf{ 49} (1999), no. 4, 1225--1240.
	
	\bibitem{Kelley_topbook}J.L.~ Kelley, \textit{General Topology} D.~ Van Nostrand Company Inc., 1955.  
		
	\bibitem{krieger} W.~Krieger, \textit{On a Dimension for a Class of Homeomorphism Groups}, Math. Ann. \textbf{252}, 87--95 (1980). 
	
	\bibitem{lavnek} Y.~ Lavrenyuk and V.~ Nekrashevych, \textit{On classification of inductive limits of direct products
		of alternating groups}, J. London Math. Soc. (2) \textbf{75} (2007) 146--162 \url{doi:10.1112/jlms/jdl009}
	
	\bibitem{matui}H.~ Matui, \textit{Some remarks on topological full groups of Cantor minimal systems} Internat. J. Math. \textbf{17(2)} (2006), 231--251.
	
	\bibitem{munkres_topbook}J.R.~ Munkres, \textit{Topology (2nd Edition)}, Prentice Hall, Inc., 2000. 
	
	\bibitem{nek_torsion_simple}V.~ Nekrashevych, \textit{Palindromic subshifts and simple periodic
		groups of intermediate growth}, Ann. of Math.  \textbf{187} (2018), 667--719
	\href{https://doi.org/10.4007/annals.2018.187.3.2}{doi:10.4007/annals.2018.187.3.2}
	
	\bibitem{neretin}Yu.~ A.~ Neretin, \textit{Combinatorial analogues of the group of diffeomorphisms of the circle.} Russian Acad. Sci. Izv. Math. \textbf{41} no2, (1993), 337--349. 



\end{thebibliography}
\end{document}